\documentclass[12pt]{amsart}

\usepackage{amsmath}
\usepackage{amsfonts}
\usepackage{amssymb}
\usepackage{amsthm}
\usepackage[T1]{fontenc}

\def\C{\mathbb{C}}

\def\F{\mathbb{F}}

\newcommand{\supp }{\mathrm{supp\,}}

\newcommand{\Jac }{\mathrm{ {\mathcal J}ac}}

\newtheorem{theo}{Theorem}[section]
\newtheorem{definition}[theo]{Definition}
\newtheorem{proposition}[theo]{Proposition}
\newtheorem{corollary}[theo]{Corollary}
\newtheorem{lemma}[theo]{Lemma}
\newtheorem{remark}[theo]{Remark}
\newtheorem{theorem}[theo]{Theorem}
\newtheorem{example}[theo]{Example}

\def\div{\mathsf{D}}
\def\pp{\mathsf{Princ}}
\def\FF{\mathbf{F}}
\def\CC{\mathbf{C}}
\def\bP{\mathsf{P}}

\begin{document}
\title[Existence of  dimension zero divisors]
{On the existence of dimension zero divisors
in algebraic function fields defined over $\F_q$}
\author{S. Ballet}
\address{Institut de Math\'{e}matiques
de Luminy\\ case 930, F13288 Marseille cedex 9\\ France}
\email{ballet@iml.univ-mrs.fr}
\author{C. Ritzenthaler}
\thanks{Second author partially supported by grant MTM2006-11391 from the Spanish MEC}
\address{Institut de Math\'{e}matiques
de Luminy\\ case 930, F13288 Marseille cedex 9\\ France}
\email{ritzenth@iml.univ-mrs.fr}
\author{R. Rolland}
\address{Institut de Math\'{e}matiques
de Luminy\\ case 930, F13288 Marseille cedex 9\\ France}
\email{robert.rolland@acrypta.fr}
\date{\today}
\keywords{finite field, function field, non-special divisor}
\subjclass[2000]{Primary 12E20; Secondary 14H05}
\begin{abstract}
Let $\FF/\F_q$ be an algebraic function field of genus $g$ defined over a finite field $\F_q$.
We obtain new results on the existence, the number and the density of dimension zero divisors of degree $g-k$ in $\FF/\F_q$ where $k$ is an integer $\geq 1$. 
In particular, for $q=2,3$ we prove that there always exists  a dimension zero divisor of degree $\gamma-1$ where $\gamma$ is the $q$-rank of $\FF$. 
We also give a necessary and sufficient condition for the existence of a dimension zero divisor of degree $g-k$ for a hyperelliptic field $\FF$ in terms of its Zeta function.
\end{abstract}

\maketitle

\section{Introduction}
Let $\FF/\F_q$
be an algebraic function field of 
one variable defined over a finite field $\F_q$. We will always 
suppose that the 
full constant field of $\FF/\F_q$ is $\F_q$ and denote by $g$ the genus of $\FF$. 
If $D$ is a (rational) divisor we recall that  the $\F_q$-\emph{Riemann-Roch vector space} associated to $D$ and denoted
${\mathcal L}(D)$ is the following subspace of rational functions
\begin{equation}\label{fct}
{\mathcal L}(D)= \{ x \in \FF ~|~(x)\geq -D\}\cup \{0\}.
\end{equation}
By Riemann-Roch Theorem we know that the dimension of this vector space,
denoted by $\dim (D)$, is related to the genus of $\FF$ and the degree of $D$
by 
\begin{equation} \label{RR}
\dim (D) = \deg (D)-g+1 +\dim (\kappa-D),
\end{equation}
where $\kappa$ denotes a canonical divisor of $\FF/\F_q$ of degree $2g-2$.
In this relation, the complementary term $i(D) =\dim (\kappa-D)$ is called the \emph{index of speciality} and 
is not easy to compute in general.
In particular, a divisor $D$ is non-special when the index of speciality $i(D)$
is zero. Many deep results have been obtained on the study of such divisors in the dual language of curves when the field of definition is algebraically closed. See for instance \cite{arcoal} for a beautiful survey over $\C$. On the contrary, few results are known when the rationality of the divisor is taken into account as in our context where we require the divisor $D$ to be defined over $\F_q$.
We refer to \cite{balb} for known results on the existence of non special divisors of degree $g$ and $g-1$.\\

In the present article, we study a natural generalization of certain results obtained in \textit{loc. cit.} by  looking at \emph{dimension zero divisors}, i.e. divisor $D$ such that $\dim D=0$. Clearly, such a divisor has degree less than or equal to $g-1$. The non special divisors of degree $g-1$ form the borderline case. In Corollary \ref{kmin} we prove that for $g \geq 1$, $q \geq 4$ (resp. q$=3$, resp. $q=2$) and $k \geq 1$ (resp. $k \geq 2$, resp. $k\geq 5$) there always exists a dimension zero divisor of degree $g-k$. When $q \leq 3$, it is not known whether  there exist infinitely many function fields without non special degree $g-1$ divisors (see \cite[Rem.12]{balb} for examples with $g \leq 3$). For $q=p \leq 3$, we slightly clarify the situation by showing that,  when the Jacobian of $\FF/\F_q$ is ordinary,  there always exists a non special divisor of degree $g-1$ (Proposition \ref{ordinary2}). More generally, we show the existence of a dimension zero divisor of degree $\gamma-1$ where $\gamma$ is the so called $p$-rank of $\FF$.   Also, if $\FF/\F_2$ has at least three degree one places, then one can replace $k \geq 5$ by $k \geq 2$ in Corollary \ref{kmin}.\\
If the Zeta function of $\FF$ is known, we give a sufficient condition on its coefficients to have a dimension zero divisor of degree $g-k$ (Theorem \ref{coropi}). In general (Remark \ref{nonsufficient})  the knowledge of the Zeta function is  not sufficient  to distinguish between function fields with or without dimension zero divisor of a certain degree.  However if $\FF$ is hyperelliptic, using results from \cite{pell},  we can give a necessary and sufficient condition (Theorem \ref{hyperelliptic}).\\
Many inequalities and  techniques we used are refinements of the ones developed in \cite{balb}. More specifically most results are derived from  the inequality $A_m < h$ where $A_m$ is the number of effective divisors of a certain degree $m$ and $h$ 
the divisor class number (see Lemma \ref{pp}). To the best of our knowledge, the use of the $p$-rank  and the hyperelliptic case are original. \\
Apart from theoretical interests, our study is motivated by the appearance of such divisors in many applications (see \cite{shtsvl} and \cite{ball5}). Hence we do not only prove existence results but density results as well in order to justify the good behavior of certain algorithms.  \\




Here is an overview of our paper. In Section \ref{prelim}, we recall the basic definitions and notation
for algebraic function fields and elementary and known results on zero dimension divisors.
 Then, in Section \ref{finit} we 
give our main results concerning the existence of
dimension zero divisors. In Section \ref{ex}, we study the special cases $q=2,3$ under various hypotheses.  In Section \ref{density}, we prove that a random draw of a divisor of degree $g-k$
gives with a high probability a dimension zero divisor.
Then we compare the results obtained in section \ref{finit}
with the ones obtained in \cite{tsfa} and \cite{shtsvl} which can be deduced directly
from the known asymptotical properties  of the Zeta-Functions. 

\section{Preliminaries}\label{prelim}

\subsection{Notation.} Let us recall the usual notation (for the basic notions related
to an algebraic function field $\FF/\F_q$ see \cite{stic}). Let $\FF/\F_q$ be a function field of genus $g$.
For any integer $k\geq 1$ we denote by $\bP_k(\FF/\F_q)$ 
the set of places of degree $k$, by $B_k(\FF/\F_q)$ the 
cardinality of this set and by $\bP(\FF/\F_q)=\cup_k \bP_k(\FF/\F_q)$.
The divisor group of $\FF/\F_q$  is denoted by ${\div}(\FF/\F_q)$.
If a divisor $D \in {\div}(\FF/\F_q)$ is such that
$$D= \sum_{P \in \bP(\FF/\F_q)} n_P P,$$
the support of $D$ is the following finite set
$$\supp (D)=\{ P \in \bP(\FF/\F_q)~|~n_P \neq 0\}$$
and its degree is
$$\deg(D)=\sum_{P\in \bP(\FF/\F_q)} n_P\deg(P).$$
We denote by ${\div}_n(\FF/\F_q)$ the set of divisors of degree $n$.
We say that the divisor $D$ is \emph{effective} if for each $P\in \supp(D)$
we have $n_P \geq 0$ and we denote ${\div}_n^+(\FF/\F_q)$ the set of effective divisors of degree $n$ and $A_{n}=\# {\div}_n^+(\FF/\F_q)$.\\
The \emph{dimension} of a divisor $D$, denoted by $\dim(D)$, is the dimension
of the vector space ${\mathcal L}(D)$ defined by the formula (\ref{fct}).\\
Let $x \in \FF/\F_q$, we denote by $(x)$ the divisor
associated to the rational function $x$, namely
$$(x)=\sum_{P\in\bP(\FF/\F_q)} v_P(x)P,$$
where $v_P$ is the valuation at the place $P$.
Such a divisor $(x)$ is called a principal divisor, and
the set of  principal divisors is
a subgroup of ${\div}_0(\FF/\F_q)$ denoted by ${\pp}(\FF/\F_q)$.
The factor group 
$${\mathcal C}(\FF/\F_q)={\div}(\FF_q)/{\pp}(\FF/\F_q)$$
is called the divisor class group. If $D_1$ and $D_2$ are
in the same class, namely if the divisor $D_1-D_2$ is principal,
we will write $D_1 \sim D_2$. We will denote
by $[D]$ the class of the divisor $D$.\\
If $D_1 \sim D_2$, the following holds
$$\deg(D_1)=\deg(D_2), \quad \dim(D_1)=\dim(D_2),$$
so that we can define the degree 
$\deg([D])$ and the dimension $\dim([D])$ of a class.
Since the degree of a principal divisor is $0$, we can define the subgroup
${\mathcal C}(\FF/\F_q)^0$ of classes of degree $0$ divisors in ${\mathcal C}(\FF/\F_q)$.
It is a finite group and we denote by $h$ its order, called the \emph{class number} of $\FF/\F_q$. Moreover if 
$$L(t)=\sum_{i=0}^{2g} a_i t^i=\prod_{i=1}^{g} [(1-\pi_i t)(1-\overline{\pi_i} t)]$$
with $|\pi_i|=\sqrt{q}$ is the numerator of the Zeta function of $\FF/\F_q$, we have  
$h=L(1)$. Finally we will denote $h_{n,k}$ the number of classes of divisors of degree $n$ and of dimension $k$.\\

In the sequel, we may  simultaneously use the dual language of (smooth, absolutely irreducible, projective) curves by associating to $\FF/\F_q$ a unique ($\F_q$-isomorphism class of) curve $\CC/\F_q$ of genus $g$ and conversely to such a curve its function field. Because, by F.K. Schmidt's theorem (cf. \cite[Corollary V.1.11]{stic}) there always exists a rational divisor of degree $1$, the group ${\mathcal C}(\FF/\F_q)^0$ is isomorphic to the group of $\F_q$-rational points on the Jacobian of $\CC$, denoted $\Jac(\CC)$. In particular $h(\FF/\F_q)=\# \Jac(\CC)(\F_q)$. 

\subsection{Elementary results on dimension zero divisors.}
Let $\FF/\F_q$ be a function field of genus $g>0$. We are interested in dimension zero divisors. From Riemann-Roch theorem, they are the divisors $D$ such that $[D]$ does not contain an effective divisor. If $\deg(D)<0$ then $D$ is automatically a dimension zero divisor. In the sequel we then assume that $\deg(D) \geq 0$. Note that the divisor with empty support exists and is effective. So, if $\deg(D)=0$, $D$ is a dimension zero divisor if and only if $D$ is not principal.
Finally, as said in the introduction, since $i(D) \geq 0$, it follows easily from Riemann-Roch theorem that a dimension zero divisor is necessarily of degree less than $g$. The following lemma shows that among them, the degree $g-1$ dimension zero divisors represent the extremal case.
\begin{lemma} \label{induction}
Let $D \in \div_{g-1}(\FF/\F_q)$ be a dimension zero divisor of degree $g-1$. If $B_1(\FF/\F_q)>0$ then for all $k>0$, there exists a dimension zero divisor of degree $g-k$.
\end{lemma}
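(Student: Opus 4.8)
The plan is to exploit the monotonicity of the Riemann--Roch space under the natural partial order on divisors, combined with the hypothesis $B_1(\FF/\F_q)>0$ which furnishes a place to subtract off. Recall that whenever $D'\leq D$, meaning that $D-D'$ is effective, one has $\mathcal{L}(D')\subseteq \mathcal{L}(D)$: indeed, if $x\in\mathcal{L}(D')$ then $(x)\geq -D'\geq -D$, so $x\in\mathcal{L}(D)$. In particular, if $D$ is a dimension zero divisor, so that $\mathcal{L}(D)=\{0\}$, then every $D'\leq D$ is a dimension zero divisor as well.

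Concretely, I would first fix a place $P\in\bP_1(\FF/\F_q)$ of degree one, which exists since $B_1(\FF/\F_q)>0$. For $k=1$ the given divisor $D$ already has degree $g-1$ and dimension zero, so there is nothing to prove. For $k\geq 2$, I would set $D_k=D-(k-1)P$. Since $\deg(P)=1$, this divisor has degree $(g-1)-(k-1)=g-k$, as required.

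It then remains to verify that $\dim(D_k)=0$. As $(k-1)P$ is effective we have $D_k\leq D$, so the monotonicity observation above gives $\mathcal{L}(D_k)\subseteq\mathcal{L}(D)=\{0\}$, whence $\dim(D_k)=0$. This handles every $k>0$ uniformly; note that for large $k$ the degree $g-k$ becomes negative, which is consistent with the earlier remark that divisors of negative degree automatically have dimension zero.

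I do not expect a genuine obstacle here: the entire content is the containment $\mathcal{L}(D_k)\subseteq\mathcal{L}(D)$ together with the use of a degree one place to lower the degree by exactly one unit at a time. The precise role of the hypothesis $B_1(\FF/\F_q)>0$ is to guarantee that such a place is available, so that every intermediate degree $g-k$ is actually realized by an explicit divisor $D_k\leq D$.
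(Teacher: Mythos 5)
Your proof is correct and is essentially the paper's own argument: both subtract multiples of a degree-one place $P$ from $D$ and use that $\mathcal{L}(D-mP)\subseteq\mathcal{L}(D)$ since $mP$ is effective (the paper phrases this containment as a proof by contradiction via a function $x$ with $(x)+D-kP\geq 0$). If anything, your indexing $D_k=D-(k-1)P$ is slightly more careful than the paper's $D-kP$, which produces degree $g-1-k$ rather than $g-k$.
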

\begin{proof}
Let $P \in \bP_1(\FF/\F_q)$. Suppose that $D-k P$ is not a dimension zero divisor. Then by Riemann-Roch theorem, there exists a function $x \in \FF$ such that $(x)+D-kP$ is an effective
divisor. But then $(x)+D$ is also effective and $D$ is not a dimension zero divisor. Contradiction.
\end{proof}
Note that if degree $D$ is $g-1$, then $D$ is a dimension zero divisor if and only if $i(D)=0$. Such divisors are particular cases of \emph{non special divisors}. Over an algebraically closed field, it is easy to prove the existence of a non special divisor of degree $g-1$ for any function field $\FF$. However, if we impose the rationality of such a divisor then the question is more subtle and has been studied in \cite{balb} and \cite{ball1}.
Among other results proved there, the following are interesting for our purpose.
\begin{proposition}\label{g}
 If $B_1(\FF/\F _q)\geq  g+1$, then there is a non-special 
divisor such that
$\deg {D} = g-1$ and $\supp(D)\subset \bP_1(\FF/\F _q)$.
\end{proposition}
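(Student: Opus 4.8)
The plan is to produce first a \emph{non-special} divisor of degree $g$ supported on degree-one places, and then to descend to degree $g-1$ by subtracting a single rational place. For the descent, note that such a degree-$g$ divisor $D_g$ is effective and non-special, so $\dim(D_g)=1$ and (the constant functions already lying in $\mathcal{L}(D_g)$) force $\mathcal{L}(D_g)=\F_q$; subtracting any $P\in\bP_1(\FF/\F_q)$ then removes the constants, giving $\dim(D_g-P)=0$. Since for degree $g-1$ one has $\dim=i$ by Riemann--Roch, $D_g-P$ is a non-special divisor of degree $g-1$ with $\supp(D_g-P)\subset\bP_1(\FF/\F_q)$, as required.

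To build $D_g$ I would construct a chain $D_0=0,D_1,\dots,D_g$ of effective divisors supported on $\bP_1(\FF/\F_q)$, with $\deg D_j=j$ and $\dim(D_j)=1$ throughout, adding one rational place at a time. The mechanism is the dictionary between adding a point and the residual (canonical) system: for a degree-one place $P$, Riemann--Roch gives $\dim(D+P)-\dim(D)=1+\dim(\kappa-D-P)-\dim(\kappa-D)$, which equals $0$ exactly when $P$ is \emph{not} a base point of $|\kappa-D|$, and equals $1$ otherwise. Hence, as long as at each stage I can find a degree-one place off the base locus of $|\kappa-D_j|$, the dimension stays equal to $1$ while the index $i(D_j)=\dim(\kappa-D_j)=g-j$ drops by one at each step, reaching $i(D_g)=0$.

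The heart of the matter is the count of rational base points. At the $j$-th stage $|\kappa-D_j|$ is a complete linear system of projective dimension $r=i(D_j)-1=g-j-1$ and degree $e=\deg(\kappa-D_j)=2g-2-j$. Removing its base divisor yields a base-point-free system of the same dimension, and a base-point-free $g^r_d$ satisfies $d\ge r$ (over $\overline{\F_q}$ it defines a morphism onto a nondegenerate curve in $\mathbb{P}^r$, whose degree is at least $r$). Thus the base divisor has degree at most $e-r=g-1$, so at most $g-1$ degree-one places are base points. Since $B_1(\FF/\F_q)\ge g+1>g-1$, a suitable $P$ always exists and the induction runs to completion.

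The main obstacle is precisely this base-point estimate: the naive bound ``base locus $\le$ degree of the system'' gives $2g-2-j$ forbidden places, far too many, and it is the sharper inequality $d\ge r$ for base-point-free linear series that lowers the count to $g-1$ and makes the hypothesis $B_1\ge g+1$ exactly sufficient. One must also verify that this degree estimate and the invariance of $\dim$ survive over the non-closed field $\F_q$; they do, because the base locus is defined over $\F_q$ and the dimensions of complete linear systems are stable under base change, so that only the $\F_q$-rational points of the base locus need to be avoided.
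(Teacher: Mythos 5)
Your overall strategy is the standard one and is consistent with what the paper relies on: the paper itself does not reprove Proposition \ref{g} (it is quoted from \cite{balb}), but the Remark immediately following it performs exactly your descent step. Your induction is also sound as stated: starting from $D_0=0$ and adding rational places while avoiding the base locus of the residual system $|\kappa-D_j|$, the base divisor of $|\kappa-D_j|$ has degree at most $(2g-2-j)-(g-j-1)=g-1$ by the moving-part argument, so at most $g-1$ rational places are excluded at each stage and the chain reaches an effective non-special $D_g$ of degree $g$ supported on $\bP_1(\FF/\F_q)$. Note that this part of the argument only needs $B_1(\FF/\F_q)\geq g$, not $g+1$; your inequality ``$B_1\geq g+1>g-1$'' uses more than is necessary there.

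The genuine flaw is in the descent step: it is \emph{not} true that subtracting any $P\in\bP_1(\FF/\F_q)$ removes the constants. If $P\in\supp(D_g)$, then $D_g-P$ is still effective, so $\F_q\subseteq{\mathcal L}(D_g-P)$, hence $\dim(D_g-P)=1$ and, by Riemann--Roch in degree $g-1$, $i(D_g-P)=\dim(D_g-P)=1$: the divisor is special and your proof fails for such $P$. One must choose $P\notin\supp(D_g)$; then for a nonzero constant $c$ the divisor $-D_g+P$ has coefficient $+1$ at $P$, so $(c)=0\not\geq -D_g+P$ and ${\mathcal L}(D_g-P)=\{0\}$, i.e. $D_g-P$ is non-special. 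This is precisely where the hypothesis $B_1\geq g+1$ is consumed: $D_g$ is effective of degree $g$, so $\#\supp(D_g)\leq g$, and $B_1\geq g+1$ guarantees a rational place outside the support. The paper's Remark after Proposition \ref{g} states this condition $P\notin\supp(D)$ explicitly, which is exactly the point your write-up drops. With this correction (and the correct bookkeeping of where the ``$+1$'' is used), your proof is complete.
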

\begin{remark}
Assume that $D\in \div_g(\FF/\F_q)$ is an effective non-special 
divisor of degree $g\geq 1$. If there exists a degree one 
place $P$ such that $P\not\in \supp(D)$, then 
$D-P$ is a non-special divisor of degree $g-1$.
\end{remark}
Using Lemma \ref{induction}, we know that we get in these cases dimension zero divisors of degree $g-k$ for all $k>0$.
\begin{theorem}  \label{non-special}
Let $\FF/\F_q$ be a function field of genus $g(\FF)\geq 2$. 
If $q\geq 4$  there is
a non-special divisor of degree $g(\FF)-1$.
\end{theorem}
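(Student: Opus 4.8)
The plan is to reduce the statement to the single numerical inequality $A_{g-1}<h$ and then to establish that inequality from the Weil description of the zeta function. The reduction is the robust part and I would do it first. By F.K. Schmidt's theorem there are exactly $h$ divisor classes of degree $g-1$. A class $[D]$ of degree $g-1$ contains an effective divisor if and only if $\dim(D)\geq 1$, in which case it contains exactly $\frac{q^{\dim(D)}-1}{q-1}\geq 1$ of them, and effective divisors lying in distinct classes are distinct. Hence the number of degree-$(g-1)$ classes with $\dim\geq 1$ is at most $A_{g-1}$, so if $A_{g-1}<h$ then some class of degree $g-1$ has dimension $0$. Finally, for $\deg(D)=g-1$ Riemann--Roch gives $\dim(D)=\deg(D)-g+1+i(D)=i(D)$, so a dimension zero divisor of degree $g-1$ is automatically non-special. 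Everything therefore comes down to proving $A_{g-1}<h$ for $q\geq 4$.

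Next I would rewrite both sides in terms of the coefficients $a_i$ of $L(t)$. Expanding $Z(t)=L(t)/((1-t)(1-qt))$ coefficientwise gives $A_{g-1}=\sum_{i=0}^{g-1}a_i\frac{q^{g-i}-1}{q-1}$, while $h=L(1)=\sum_{i=0}^{2g}a_i$. Using the functional equation $a_{2g-i}=q^{g-i}a_i$ one sees $\sum_{i=0}^{g-1}q^{g-i}a_i=\sum_{j=g+1}^{2g}a_j$; writing $U=\sum_{i=0}^{g-1}a_i$ and $V=\sum_{i=0}^{g-1}q^{g-i}a_i$, a short manipulation turns $A_{g-1}<h$ into the positivity
\[
q\,U+(q-1)\,a_g+(q-2)\,V>0 .
\]
Writing $a_i=e_i(\pi_1,\overline{\pi_1},\dots,\pi_g,\overline{\pi_g})$ with $\pi_j=\sqrt{q}\,e^{i\theta_j}$, this is a symmetric trigonometric inequality in the Frobenius angles $\theta_j$, and this is where the hypothesis $q\geq 4$ must enter.

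For $g=2$ the mechanism is completely transparent and I would treat it as the model. There $A_1=B_1=q+1-2\sqrt{q}(\cos\theta_1+\cos\theta_2)$ and $h=\prod_{j=1}^{2}(q+1-2\sqrt{q}\cos\theta_j)$, so $h-A_1$ is multilinear in the variables $\cos\theta_j$ and is therefore minimized at a vertex $\cos\theta_j\in\{\pm 1\}$. The worst vertex is $\cos\theta_1=1,\ \cos\theta_2=-1$ (one root $+\sqrt q$, one root $-\sqrt q$), where $h=(q-1)^2$, $A_1=q+1$, and $h-A_1=q(q-3)$. Thus $h-A_1>0$ exactly for $q\geq 4$, which proves the case $g=2$ and also explains why $q=3$ is genuinely borderline. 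The main obstacle is to carry out the analogous optimization for all $g$ at once: a termwise use of the Weil bound $|a_i|\leq\binom{2g}{i}q^{i/2}$ is hopelessly lossy (it already fails at $g=2,\ q=4$), so one must exploit the cancellation forced by the $\pi_j$ lying genuinely on the circle $|\pi|=\sqrt q$.

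To organize the general estimate I would split on the number of rational places. If $B_1(\FF/\F_q)\geq g+1$, then Proposition \ref{g} already produces a non-special divisor of degree $g-1$ and there is nothing to prove; so I may assume $B_1\leq g$, which constrains the angles $\theta_j$. In this regime I would bound the partial sums $U$ and $V$ by grouping the contribution of each conjugate pair $(\pi_j,\overline{\pi_j})$ and estimating $\prod_j(q+1-2\sqrt{q}\cos\theta_j)$ from below against the middle zeta coefficient, pushing the genus-$2$ vertex analysis through by the multilinearity in each $\cos\theta_j$. I expect the delicate point to be uniform control of the alternating contributions of the intermediate-index $a_i$, which is exactly the type of refined inequality established in \cite{balb}, on which I would rely (the statement is in fact quoted from there). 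Once $A_{g-1}<h$ is secured for $q\geq 4$ and all $g\geq 2$, the reduction of the first paragraph finishes the proof, and Lemma \ref{induction} then propagates the conclusion to every smaller degree $g-k$.
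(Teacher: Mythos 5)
Your first two paragraphs are sound and coincide with the paper's own reduction: the counting argument showing that $A_{g-1}<h$ forces a degree $g-1$ class of dimension zero is exactly Lemma \ref{pp}, and your rewriting of $h-A_{g-1}$ as $\frac{1}{q-1}\bigl(qU+(q-1)a_g+(q-2)V\bigr)$ is correct, as is the $g=2$ vertex computation giving $h-A_1\geq q(q-3)$. The problem is that the entire content of the theorem for general $g$ is precisely the inequality $A_{g-1}<h$, and your proposal never proves it: your fourth paragraph is a program rather than an argument, and its fallback --- invoking \cite{balb}, the very paper from which the statement is quoted --- is circular. Moreover the program as sketched cannot work in the form stated. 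Although $h-A_{g-1}$ is indeed multilinear in the variables $x_j=2\sqrt{q}\cos\theta_j$ for every $g$ (each coefficient of $L(t)=\prod_j(1-x_jt+qt^2)$ is), the minimum over the unconstrained box is \emph{negative} for $g\geq 3$: take $q=4$, $g=3$ and all $\pi_j=\sqrt{q}=2$, i.e.\ $L(t)=(1-2t)^6$; then $h=L(1)=1$ while $A_2=21-60+60=21$, so $h-A_2=-20$. (No function field has this $L$, since it forces $N_1=-7$, which is exactly the point: the positivity constraints on the $A_n$ and $N_m$ must be used, and once they are imposed the feasible region is no longer a box, so vertex minimization is no longer available. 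Your restriction to $B_1\leq g$ does not by itself repair this.)

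The missing idea is the one the paper uses in Theorem \ref{princ} (whose case $k=1$, $q\geq 4$ is noted after Corollary \ref{kmin} to generalize this very theorem): work with the nonnegative quantities $A_n$ rather than the oscillating coefficients $a_i$. From the functional equation one has the identity
$$\sum_{n=0}^{g-2}A_nt^n+\sum_{n=0}^{g-1}q^{g-1-n}A_nt^{2g-2-n}=\frac{L(t)-ht^g}{(1-t)(1-qt)},$$
and evaluating at $t=q^{-1/2}$, using the single positivity fact $L(q^{-1/2})\geq 0$ (this is the only place where $|\pi_i|=\sqrt{q}$ enters, replacing all of your trigonometric optimization), yields
$$2\sum_{n=0}^{g-2}q^{(g-1-n)/2}A_n+A_{g-1}\leq \frac{h}{(\sqrt{q}-1)^2}.$$
Since $A_0=1$ and $A_n\geq 0$, this gives $A_{g-1}\leq \frac{h}{(\sqrt{q}-1)^2}-2q^{(g-1)/2}$, and for $q\geq 4$ one has $(\sqrt{q}-1)^2\geq 1$, whence $A_{g-1}<h$ for every $g\geq 2$. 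With this substitution your own first paragraph finishes the proof, and the case split on $B_1$ and the appeal to Proposition \ref{g} become unnecessary.
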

Finally, in order to get rid of the small genus cases, we give the following proposition.
\begin{proposition}
Let $\FF/\F_q$ be a function field of genus $g(\FF) \leq 2$. There is always a dimension zero divisor of degree $0$ except in the following cases :
\begin{enumerate}
\item[\rm(i)]  $g=1$ and $\FF/\F_q$ is given by
$$\begin{cases}
y^2+y=x^5+x^3+1 & q=2, \\
y^2=x^3+2x+2 & q=3,\\
y^2+y=x^3+a & q=4 \; \textrm{with} \; a^2+a+1=0;
\end{cases}$$
\item[\rm(ii)] $g=2, q=2$ and $\FF/\F_2$ is given by
$$y^2+y=x^5+x^3+1 \; \textrm{or}  \quad y^2+(x^3+x+1) y=x^6+x^5+x^4+x^3+x^2+x+1.$$ 
\end{enumerate}
Let $\FF/\F_q$ be a function field of genus $g(\FF) = 2$. There is always a dimension zero divisor of degree $1$ except if $\FF/\F_q=\F_2(x,y)/\F_2$, with\\ $y^2+y=x^5+x^3+1$ or $y^2+y=(x^4+x+1)/x$.
\end{proposition}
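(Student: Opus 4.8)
The plan is to reduce both assertions to purely numerical conditions on the class number $h$ and then to classify the finitely many exceptional fields. By the elementary facts recalled above, a divisor of degree $0$ has dimension $0$ exactly when it is non-principal, so a dimension zero divisor of degree $0$ exists if and only if $h\geq 2$. For the degree $1$ statement (where $g=2$), F.K. Schmidt's theorem gives exactly $h$ divisor classes of degree $1$, and such a class has dimension $\geq 1$ iff it contains an effective divisor, i.e. iff it equals $[P]$ for some $P\in\bP_1(\FF/\F_q)$; since for $g\geq 1$ distinct degree one places lie in distinct classes, there are precisely $B_1$ such classes. Hence there are $h-B_1$ classes of degree $1$ and dimension $0$, and a dimension zero divisor of degree $1$ exists if and only if $h>B_1$. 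The proposition is thus equivalent to determining all $\FF/\F_q$ with $h=1$ (degree $0$ part) and all genus $2$ fields with $h=B_1$ (degree $1$ part).

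Next I would express these conditions through the Frobenius eigenvalues. Writing $x_i=\pi_i+\overline{\pi_i}$, so that $|x_i|\leq 2\sqrt q$, one has $h=L(1)=\prod_{i=1}^{g}(q+1-x_i)$ and, for $g=2$, $B_1=q+1-(x_1+x_2)$, which yields the identity $h-B_1=(q-x_1)(q-x_2)+q$. From $q+1-x_i\geq(\sqrt q-1)^2$ we get $h\geq(\sqrt q-1)^{2g}$, so $h=1$ forces $(\sqrt q-1)^2\leq 1$, i.e. $q\leq 4$; and for $g=2$ with $q\geq 4$ we have $x_i\leq 2\sqrt q\leq q$, whence $h-B_1\geq q>0$, so $h=B_1$ forces $q\leq 3$. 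Everything therefore reduces to the finitely many fields with $q\in\{2,3,4\}$.

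For these $q$ I would enumerate the integer pairs $(x_1+x_2,\,x_1x_2)$ (equivalently the coefficients $(a_1,a_2)$ of $L$) compatible with $|x_i|\leq 2\sqrt q$ and satisfying $h=1$ or $h=B_1$. Many candidates are discarded at once because they force the point count $B_1=q+1-(x_1+x_2)$ to be negative: this already removes the would-be genus $2$ exceptions over $\F_3$ and $\F_4$ for the degree $0$ problem (there $x_1+x_2=6$, resp. $8$, exceed $q+1$). The genus $1$ candidates with $h=1$, namely trace $q$, survive and correspond to the unique elliptic curve with a single rational point over each of $\F_2,\F_3,\F_4$, which one writes down explicitly.

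The main obstacle is the remaining, genuinely subtle realizability question. For the degree $1$ problem the only critical candidate over $\F_3$ is $L(t)=(1-3t^2)^2$ (a supersingular class with $h=B_1=4$), which is \emph{not} eliminated by positivity of the point count, so one must prove that no genus $2$ curve over $\F_3$ has this zeta function. Here I would invoke the classification of abelian surfaces that are Jacobians (Maisner--Nart) together with tables of function fields of small class number, both to establish such non-realizations and to match the surviving cases over $\F_2$ with the explicit curves listed in the statement. Verifying the (non-)realizations and pinning down the precise defining equations is the technical heart of the argument, and is handled by the explicit small-field classifications rather than by the general inequalities above.
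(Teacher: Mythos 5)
Your proposal is correct, but it follows a genuinely different route from the paper. The paper's proof is essentially a reduction-plus-citation: it observes (as you do) that a degree $0$ dimension zero divisor exists iff $h\geq 2$, and then invokes the known classifications of function fields with class number one (the well-known genus $1$ list, and the genus $2$ results of Madan--Queen and Leitzel--Madan--Queen), while the degree $1$ statement is quoted wholesale from Theorem 11 of the earlier paper of Ballet and Le Brigand. You instead re-derive the whole numerical reduction yourself: the counting argument showing that the degree $1$ classes of positive dimension are exactly the $B_1$ classes $[P]$ (so the criterion is $h>B_1$), the identity $h-B_1=(q-x_1)(q-x_2)+q$, and the Weil-bound estimates forcing $q\leq 4$ (resp.\ $q\leq 3$); these are correct (I checked the identity and the eliminations you claim, e.g.\ that genus $2$, $h=1$ over $\F_3$ and $\F_4$ forces $x_1+x_2=6$, resp.\ $8$, hence $B_1<0$). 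What your approach buys is transparency and self-containedness in the reduction to finitely many candidate $L$-polynomials, together with quantitative criteria that the paper never states; what the paper's approach buys is brevity and reliance on precisely-on-point published classifications, which are in any case unavoidable at the very end (you too must cite Maisner--Nart and tables to match the surviving zeta functions with the explicit equations). One refinement you missed: your ``genuinely subtle'' candidate $L(t)=(1-3t^2)^2$ over $\F_3$ needs no Jacobian classification at all, since it gives $N_2=9+1-12=-2<0$ points over $\F_9$ (equivalently $B_2<0$), so it is eliminated by positivity of point counts after all --- just over the quadratic extension rather than the base field; the same trick ($N_2<0$ or $N_3<0$) kills the other non-realizable candidates over $\F_2$, leaving the classification literature needed only to pin down uniqueness of the realizing curves and their equations.
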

\begin{proof}
As said, a degree $0$ divisor is zero dimensional if and only if it is not principal. Hence, a function field  $\FF/\F_q$ has no degree $0$ zero dimension divisor if and only if $h(\FF)=1$. When $g=1$, it is well known that the only cases are the one indicated in the proposition. For $g=2$, this  was proved in \cite{maqu} and \cite{lemaqu}. For the degree $1$ case, this has been studied in \cite[Th.11]{balb}
\end{proof}
\noindent
In the sequel, we assume that $g(\FF)>2$.

\section{On the existence of dimension zero divisors}\label{finit}

In this section we establish our main results on the existence of divisors of degree $g-k$
and dimension zero for an algebraic function field $\FF$ of genus $g$ defined over $\F_q$.

\subsection{Relation with the Jacobian}
First, let us give a key lemma in proving the existence of
divisors of dimension zero. 

\begin{lemma}\label{pp}
Let $n$ be an integer and assume that $A_n=\# {\div}_n^+(\FF/\F_q)  <h$, where $h$ is the class number of $\FF$. Then there exists a divisor
of degree $n$ and dimension zero. 
More precisely, the number of  classes of divisors 
of degree $n$ 
and dimension $zero$, denoted $h_{n,0}$ is greater than or equal to $h-A_n$.
\end{lemma}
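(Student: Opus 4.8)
The plan is to prove the statement by a counting (pigeonhole) argument over the divisor class group of degree $n$, exploiting the fact that every effective divisor lies in exactly one class and that a class of degree $n \geq 0$ has dimension zero precisely when it contains no effective divisor.

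First I would record that the number of classes of degree $n$ equals $h$. Indeed, by F.K. Schmidt's theorem there is a rational divisor $D_0$ of degree one, so the assignment $[D_1] \mapsto [D_1 + n D_0]$ induces a bijection from ${\mathcal C}(\FF/\F_q)^0$ onto the set of classes of degree $n$; since $\#{\mathcal C}(\FF/\F_q)^0 = h$, there are exactly $h$ such classes. Next I would partition these $h$ classes according to whether their dimension is $0$ or $\geq 1$. A class $[D]$ of degree $n$ satisfies $\dim([D]) \geq 1$ if and only if it contains an effective divisor: if $E \geq 0$ and $E \sim D$ then $1 \in {\mathcal L}(E)$, whence $\dim([D]) \geq 1$; conversely a nonzero $x \in {\mathcal L}(D)$ produces the effective divisor $(x) + D \sim D$. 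This is exactly the characterization of dimension zero already isolated in the preliminary discussion via Riemann--Roch.

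Consequently, sending each effective divisor of degree $n$ to its class defines a surjection from ${\div}_n^+(\FF/\F_q)$ onto the set of degree $n$ classes of dimension $\geq 1$; its image therefore has cardinality at most $A_n = \#{\div}_n^+(\FF/\F_q)$. Combining the two counts gives $h_{n,0} = h - \#\{\text{degree } n \text{ classes of dimension} \geq 1\} \geq h - A_n$, which is the asserted lower bound. The hypothesis $A_n < h$ then forces $h_{n,0} \geq h - A_n > 0$, so at least one class of degree $n$ has dimension zero, producing the desired dimension zero divisor.

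The argument is elementary and I expect no serious obstacle; the only point requiring care is the bookkeeping that the map ``effective divisor $\mapsto$ its class'' need not be injective (two effective divisors are linearly equivalent as soon as the common class has dimension $\geq 2$), so one can only bound the size of its \emph{image} by $A_n$ rather than count preimages. This is precisely what the inequality $h_{n,0} \geq h - A_n$ records, and it is why the refined conclusion counts classes rather than divisors.
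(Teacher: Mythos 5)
Your proof is correct and follows essentially the same route as the paper's: both use F.K. Schmidt's theorem to identify the degree $n$ classes with the $h$ degree zero classes, observe that a class has positive dimension exactly when it contains an effective divisor, and then bound the number of such classes by $A_n$ to conclude $h_{n,0} \geq h - A_n > 0$. The only cosmetic difference is that the paper phrases the count as non-surjectivity of the map $\psi(D)=[D-nD_0]$ restricted to ${\div}_n^+(\FF/\F_q)$, whereas you phrase it as a surjection from effective divisors onto the classes of dimension $\geq 1$; these are the same argument.
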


\begin{proof}
We know by F.K. Schmidt's theorem (cf. \cite[Corollary V.1.11]{stic})
that there always exists a divisor of degree $1$, say  $D_0 \in {\div}_1(\FF/\F_q)$. For all $n$, we denote by $\psi$  the map from  ${\div}_n(\FF/\F_q)$ into the Jacobian of $\FF/\F_q$ given by
$$\psi(D)=\left [D-n D_0\right ].$$
Since we assumed that $A_n<h$, we have that 
$$\# \psi\left ( {\div}_n^+(\FF/\F_q)\right ) \leq A_n <h,$$
namely the restriction to  ${\div}_n^+(F/\F_q)$ of the map $\psi$ is not surjective. If $[D']$  is a class 
of degree $0$ not in the image of $\psi$, then $[D'+n D_0]$ 
does not contain an effective divisor, hence $D'+n D_0$ 
is a dimension zero divisor of degree $n$.
The number of classes $[D]$ of degree $n$ where $D$ is not 
equivalent to a positive divisor
is $ h_{n,0}\geq h-A_n$.
\end{proof}

\begin{remark}
We can give  examples where a possible  equivalence of the first claim in Lemma \ref{pp} does not hold:  for instance the functions field $\FF_1/\F_2 : y^2 + (x^4 + x^3 + x) y = x^{10} + x^6 + 1$ (resp. $\FF_2/\F_2 : y^2 + (x^4 + x^3 + x^2) y = x^{10} + x^7 + x^3$) is of genus $3$ (resp. $4$) with class number $8$ (resp. $10$) and $A_{2}=9$ (resp. $A_{3}=13$). However $\FF_1$ (resp. $\FF_2$) is ordinary and one can apply Proposition \ref{ordinary} to show that there is a dimension zero divisor of degree $2$ (resp. $3$) in $\div(\FF_1/\F_2)$ (resp. in $\div(\FF_2/\F_2)$). 
\end{remark}

\subsection{Main results}

Some applications (cf. \cite{shtsvl} and \cite{ball5}) require many linearly independent 
divisors of dimension
zero. Then we are not only
interested in the existence of such a divisor,
but also in their number. \\ 
Let $k$ be a positive integer.
Let us set the following notation
\begin{enumerate}
 \item $C_q=
\left \{ 
\begin{array}{ccc}
\frac{2(\sqrt{q}-1)^2}{\sqrt{q}} & \hbox{if} & k \geq 2\\
\frac{(\sqrt{q}-1)^2}{\sqrt{q}} & \hbox{if} & k = 1
\end{array}
\right .
$
 \item $l_q\left(\strut k\right)=C_q\,
q^{\frac{k}{2}}$
 \item $\Delta_q=
\left\{
\begin{array}{ccc}
  q^{\frac{g-k}{2}} & \hbox{if} & k\geq 2\\
  2q^{\frac{g-1}{2}} & \hbox{if} & k = 1.
\end{array}
\right .
$
\end{enumerate}
Then the following result holds:

\begin{theorem}\label{princ}
Assume that $k$ is a  positive integer
satisfying the inequality 
\begin{equation}\label{Cq}
-2\log_q\left ( C_q \right) \leq 
k.
\end{equation}
Then, 
there is at least one dimension zero divisor of degree $g-k$ in $\div_{g-k}(\FF/\F_q)$.
Moreover, the number $h_{n,0}$ of (classes of)  linearly independent divisors of degree
$n=g-k$ and dimension zero satisfies 
\begin{equation}\label{nbz}
h_{n,0} \geq h\left (1-\frac{1}{l_q\left(\strut k \right)}\right )+
\Delta_q. 
\end{equation}
\end{theorem}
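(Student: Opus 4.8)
The plan is to deduce everything from Lemma \ref{pp}. Setting $n=g-k$, that lemma already tells us that as soon as $A_{g-k}<h$ we get both a dimension zero divisor of degree $g-k$ and the estimate $h_{n,0}\ge h-A_{g-k}$. So I would first reduce the whole theorem to the single inequality
\[
A_{g-k}\le \frac{h}{l_q(k)}-\Delta_q .
\]
The hypothesis \eqref{Cq} is, after taking $q$-logarithms, exactly the statement that $l_q(k)=C_q\,q^{k/2}\ge 1$; hence $h/l_q(k)\le h$ and the displayed bound forces $A_{g-k}<h$, which gives existence. Substituting $h_{n,0}\ge h-A_{g-k}$ into the displayed bound then yields
\[
h_{n,0}\ge h-\Bigl(\tfrac{h}{l_q(k)}-\Delta_q\Bigr)=h\Bigl(1-\tfrac{1}{l_q(k)}\Bigr)+\Delta_q,
\]
which is \eqref{nbz}. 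Thus the entire content of the statement is concentrated in the upper bound on $A_{g-k}$.

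To prove that bound I would work with the numerator $L(t)=\prod_i(1-\pi_i t)(1-\overline{\pi_i}t)$ of the Zeta function together with the identity $h=L(1)$. A first, purely formal, step uses the Riemann--Roch symmetry $\dim(D)=\dim(\kappa-D)+\deg(D)-g+1$, which yields the exact relation $A_{g-k}=q^{1-k}A_{g+k-2}+\frac{h}{q-1}(q^{1-k}-1)$; since $k\ge 1$ the last summand is $\le 0$, so $A_{g-k}\le q^{1-k}A_{g+k-2}$ and one is reduced to controlling an effective-divisor count against $h$. The quantitative input is then an estimate obtained from Weil's bounds $|\pi_i|=\sqrt q$: writing $A_m\le Z(t)/t^m$ for $0<t<1/q$ and bounding the \emph{ratio} $L(t)/L(1)=\prod_i|1-\pi_i t|^2/|1-\pi_i|^2$ instead of $L(t)$ and $h$ separately, one is led to minimise an explicit one-variable expression in $t$. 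The optimal evaluation point, which sits near the pole $t=1/q$, is precisely what is designed to produce the constant $C_q=2(\sqrt q-1)^2/\sqrt q$ and the correction term $\Delta_q=q^{(g-k)/2}$.

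The main obstacle is making this last estimate \emph{uniform in the genus}. Bounding $A_{g-k}$ from above and $h$ from below through the crude Weil inequalities $(\sqrt q-1)^{2g}\le h\le(\sqrt q+1)^{2g}$ applied separately loses a factor that is exponential in $g$, so one cannot afford to decouple numerator and denominator: the argument must keep $L(t)$ tied to $L(1)=h$ throughout and exploit that each factor $|1-\pi_i t|^2/|1-\pi_i|^2$ is controlled uniformly over admissible Weil numbers. Finally I would treat the borderline case $k=1$ separately, since there the degree $g-1$ is extremal (the principal-part term $\frac{h}{q-1}(q^{1-k}-1)$ vanishes) and a sharper accounting of the contribution of the $\pi_i$ near $\pm\sqrt q$ replaces $C_q$ and $\Delta_q$ by the values $(\sqrt q-1)^2/\sqrt q$ and $2q^{(g-1)/2}$ fixed before the statement.
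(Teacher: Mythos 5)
Your overall reduction is the same as the paper's: hypothesis \eqref{Cq} is equivalent to $l_q(k)\ge 1$, and via Lemma \ref{pp} everything follows from the single estimate $A_{g-k}\le h/l_q(k)-\Delta_q$. The gap is in your proposed proof of that estimate, and it is not a technical gap but a fatal one. The bound $A_m\le Z(t)/t^m$ is only available inside the disc of convergence, $0<t<1/q$, and there the method cannot yield a genus-independent threshold. Since your argument uses only $|\pi_i|=\sqrt q$, it must survive the configuration where every $\pi_i$ equals (or is arbitrarily close to) $\sqrt q$; in that case one has exactly
$$\frac{Z(t)}{t^{g-k}\,L(1)}=\left[\frac{(1-\sqrt q\,t)^2}{(\sqrt q-1)^2\,t}\right]^{g}\cdot\frac{t^{k}}{(1-t)(1-qt)},$$
and the bracketed quantity is $\ge 1$ for all $t\in(0,1/q]$, with equality only at the endpoint $t=1/q$, where $1/(1-qt)$ blows up. Optimizing (one is forced to take $1-qt\asymp 1/g$) the best bound this route can certify is $A_{g-k}\lesssim g\,q^{-k}h$, which is useful only when $k\gtrsim\log_q g$ — not the constant threshold $-2\log_q C_q$ of the theorem. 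Moreover nothing in this argument produces the additive term $-\Delta_q$ required for \eqref{nbz}. Your preliminary ``formal step'' is also vacuous: applying the functional equation to $A_{g+k-2}$ gives back exactly $A_{g-k}+\frac{h}{q-1}(1-q^{1-k})$, so the inequality $A_{g-k}\le q^{1-k}A_{g+k-2}$ reduces nothing.

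The idea you are missing is to abandon $Z(t)$ and its positivity altogether, and instead use the functional equation to symmetrize into a \emph{polynomial} identity that can be evaluated \emph{beyond the pole} $t=1/q$. The paper uses (for $g\ge 2$)
$$\sum_{n=0}^{g-2}A_nt^n+\sum_{n=0}^{g-1}q^{g-1-n}A_nt^{2g-2-n}=\frac{L(t)-ht^g}{(1-t)(1-qt)},$$
a finite identity valid at every $t$, and evaluates it at $t=q^{-1/2}>1/q$. At that point both sums collapse to terms $q^{-n/2}A_n$, and since $L(q^{-1/2})=\prod_i\bigl|1-\pi_iq^{-1/2}\bigr|^2\ge 0$ one obtains
$$2\sum_{n=0}^{g-2}q^{(g-1-n)/2}A_n+A_{g-1}\le \frac{h}{(\sqrt q-1)^2},$$
with no per-root ratio bound and hence no loss in $g$. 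Retaining the term $n=g-k$ in this positive sum gives precisely the coefficient $l_q(k)$, and retaining the term $n=0$ (i.e.\ $A_0=1$) produces exactly the term $\Delta_q$; Lemma \ref{pp} then concludes as you described.
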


\begin{proof}
Note that
$$\log_q\left(\strut l_q\left(\strut k\right)\right)=
\log_q \left(C_q\right)+\frac{k}{2}\geq 0,$$
then $l_q\left(\strut k\right)\geq 1$.
From the functional equation of the zeta function, it can be deduced (see \cite[Lemma 3 (i)]{nixi}) that,  for $g\geq 1$, one has
\begin{equation}
\label{zeta1}
A_n=q^{n+1-g}A_{2g-2-n}+h\frac{q^{n+1-g}-1}{q-1},\, \mbox{ for all }0\leq n\leq 2g-2.
\end{equation}
For $g\geq 2$, it follows from $(\ref{zeta1})$  (see   \cite{lamd} or
 \cite[Lemma 3 and proof of Lemma 6]{nixi}) that
$$\sum_{n= 0}^{g-2}A_nt^n+\sum_{n=  0}^{g-1}q^{g-1-n}A_nt^{2g-2-n}=\frac{L(t)-ht^g}{(1-t)(1-qt)}.$$
Substituting  $t=q^{-1/2}$ in the last identity, we obtain
$$2\sum_{ n= 0}^{g-2}q^{-n/2}A_n+q^{-(g-1)/2}A_{g-1}=\frac{h-q^{g/2}L(q^{-1/2})}
{(q^{1/2}-1)^2q^{(g-1)/2}}.$$
For $i=1 \ldots g$ let us write $\pi_i=q^{1/2} e^{i \theta_i}$. Since $$L(q^{-1/2})=\prod_{i=1}^{g} [(1-\pi_iq^{-1/2})(1-\overline{\pi}_i q^{-1/2})]= 2^{2g} \cdot q^{-g} \cdot  \prod_{i=1}^g \sin^2 \theta_i \geq 0,$$ 
the following inequality holds
\begin{equation}
\label{zeta2}
 2\sum_{ n= 0}^{g-2}q^{(g-1-n)/2}A_n+A_{g-1}\leq \frac{h}{(\sqrt{q}-1)^2}.
 \end{equation}
Note that $A_0=1$ and $A_i \geq 0$ for all $i>0$. Then
from the inequality (\ref{zeta2}), if $k \geq 2$ the following holds
$$A_{g-k} \leq \frac{h}{2q^{\frac{k-1}{2}}(\sqrt{q}-1)^{2}}
-q^{\frac{g-k}{2}}.$$
In the same way  we get
$$A_{g-1} \leq \frac{h}{(\sqrt{q}-1)^{2}}-2q^{\frac{g-1}{2}}.$$
In any case
$$A_{g-k}\leq \frac{h}{l_q\left(\strut k \right )}-
\Delta_q<h.$$
Then there exist
$h_{n,0} \geq h-A_{g-k}$ linearly 
independent divisors  of degree $g-k$ and dimension zero 
and the equation (\ref{nbz}) holds.
\end{proof}

\begin{corollary}\label{kmin}
There exists a zero dimension divisor of degree $g-k$ in $\div_{g-k}(\FF/\F_q)$ as soon as
\begin{itemize}
 \item for $q=2$, $k \geq 5$ and $l_2(5)\approx 1.37$;
 \item for $q=3$,  $k \geq 2$ and $l_3(2)\approx 1.85$;
 \item for $q \geq 4$,  $k \geq 1$.
\end{itemize}
\end{corollary}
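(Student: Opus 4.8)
The plan is to read this off directly from Theorem \ref{princ}: for each listed pair $(q,k)$ all one needs is to check that the hypothesis (\ref{Cq}), that is $-2\log_q(C_q)\leq k$, holds, after which the theorem supplies the dimension zero divisor. Since within the regime $k\geq 2$ the quantity $C_q$ does not depend on $k$, the left-hand side of (\ref{Cq}) is a constant there while the right-hand side only grows; hence it is enough to verify the inequality at the smallest listed $k$ in each case, and it then holds for all larger $k$ automatically. I would first rewrite (\ref{Cq}) in the equivalent form $C_q\geq q^{-k/2}$, using that $\log_q$ is increasing for $q>1$.

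For the infinite family $q\geq 4$ I would take $k=1$ and use the $k=1$ value $C_q=(\sqrt{q}-1)^2/\sqrt{q}$. Then $C_q\geq q^{-1/2}$ becomes, after cancelling the common factor $1/\sqrt{q}$, simply $(\sqrt{q}-1)^2\geq 1$, that is $\sqrt{q}\geq 2$, i.e. $q\geq 4$. This one algebraic step settles the whole family uniformly, and is the cleanest part of the argument; for $k\geq 2$ the relevant $C_q$ is twice as large, so the inequality only becomes easier.

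For $q=2,3$ the value $k=1$ is excluded and I would work with the $k\geq 2$ value $C_q=2(\sqrt{q}-1)^2/\sqrt{q}$, locating the smallest integer $k$ with $C_q\geq q^{-k/2}$. A direct evaluation gives $C_3\approx 0.619$, so for $q=3$ the choice $k=2$ already satisfies $C_3\geq 3^{-1}$, whereas $k=1$ (tested with the smaller $k=1$ value of $C_3$) fails; and $C_2\approx 0.243$, for which $k=4$ just misses the bound ($2^{-2}=0.25>C_2$) while $k=5$ clears it ($2^{-5/2}\approx 0.177<C_2$). The reported constants $l_2(5)\approx 1.37$ and $l_3(2)\approx 1.85$ then follow at once from $l_q(k)=C_q\,q^{k/2}$.

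The computation has no real obstacle; the only delicate point is the sharpness of the $q=2$ threshold, where $k=4$ and $k=5$ straddle the bound by a narrow margin, so I would display both evaluations explicitly to confirm that $k=5$ is indeed the least admissible value.
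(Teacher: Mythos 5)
Your proposal is correct and matches the paper's (implicit) argument exactly: the corollary is stated as an immediate consequence of Theorem \ref{princ}, obtained by checking the hypothesis $-2\log_q(C_q)\leq k$ for each $q$, which is precisely your computation. Your numerical checks ($C_2\approx 0.243$ versus $2^{-2}$ and $2^{-5/2}$, $C_3\approx 0.619$ versus $3^{-1}$, and $(\sqrt{q}-1)^2\geq 1$ for $q\geq 4$) are all accurate, including the values $l_2(5)\approx 1.37$ and $l_3(2)\approx 1.85$.
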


Hence, as soon as $q \geq 4$, the situation is optimal. This can be seen as a generalization of Theorem \ref{non-special}.

\begin{remark}
However, the bound on $h_{n,0}$ is not optimal. For instance, we have already noticed that every degree $0$ divisor which is not principal is zero dimensional. Hence $h_{g,0}=h-1$. 
From \cite{lamd}, one knows that for any field $\FF/\F_q$ of genus $g$, $h \geq \lceil q^{g-1} \frac{(q-1)^2}{(q+1)(g+1)} \rceil$. 
A simple computation shows that for any $q$, $h-1 >  h\left (1-\frac{1}{l_q\left(\strut g \right)}\right )+
\Delta_q$ as soon as $g>20$.
\end{remark}
%

If more information is known about the Zeta function of $\FF$, other estimates can be deduced.
In the following Lemma, we give the value of $A_{g-k}$
 in terms of the coefficients of the  polynomial  $L(t)$.
\begin{lemma} \label{intermsai}
\label{A_{g-k}=}
Let $\FF/\F_q$ be a function field of genus $g$ and let $L(t)= \sum_{i=0}^{2g}a_it^i$ be the numerator of its Zeta function.
Then 
$$A_{g-k}=\frac{1 }{ q-1}\left[ q^{-k+1}\left(h-\sum_{i=0}^{g+k-1}a_i\right)-\sum_{i=0}^{g-k}a_i\right].$$ 
\end{lemma}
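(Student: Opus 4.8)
The plan is to express $A_{g-k}$ using the two tools already available in the excerpt: the functional equation relation (\ref{zeta1}) and the definition $h = L(1) = \sum_{i=0}^{2g} a_i$. The target formula involves the partial sums $\sum_{i=0}^{g+k-1} a_i$ and $\sum_{i=0}^{g-k} a_i$, which strongly suggests that the proof should rewrite things so that $A_n$ for suitable $n$ gets related to truncations of the coefficient sum of $L(t)$.

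First I would recall the standard generating-function identity for the $A_n$. Since the Zeta function is $Z(t) = L(t)/((1-t)(1-qt))$ and $\sum_{n \geq 0} A_n t^n = Z(t)$ (for $n$ in the appropriate range), expanding $1/((1-t)(1-qt)) = \sum_{j \geq 0} \frac{q^{j+1}-1}{q-1} t^j$ gives, for $0 \leq n \leq g-1$ (where the $A_{2g-2-n}$ correction of (\ref{zeta1}) is not yet needed), the closed form
\begin{equation*}
A_n = \sum_{i=0}^{n} a_i \cdot \frac{q^{\,n-i+1}-1}{q-1}.
\end{equation*}
Setting $n = g-k$ and splitting the numerator $q^{n-i+1}-1$ into its two terms, I would separate this into $\frac{1}{q-1}\bigl(q^{g-k+1}\sum_{i=0}^{g-k} a_i q^{-i} - \sum_{i=0}^{g-k} a_i\bigr)$. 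The second sum is already exactly the $-\sum_{i=0}^{g-k} a_i$ appearing in the claim, so the main work is to transform the first sum $q^{g-k+1}\sum_{i=0}^{g-k} a_i q^{-i}$ into $q^{-k+1}\bigl(h - \sum_{i=0}^{g+k-1} a_i\bigr)$.

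The key step — and the one I expect to be the main obstacle — is handling the weighted sum $\sum_{i=0}^{g-k} a_i q^{-i}$, because to turn the $q^{-i}$ weights into unweighted coefficient sums I need the functional equation of $L(t)$, namely $a_{2g-i} = q^{\,g-i} a_i$. This symmetry lets me replace $q^{g-i} a_i$ by $a_{2g-i}$, so that $q^{g-k+1}\sum_{i=0}^{g-k} a_i q^{-i} = q^{-k+1}\sum_{i=0}^{g-k} a_{2g-i} = q^{-k+1}\sum_{j=g+k}^{2g} a_j$. Then, using $\sum_{j=g+k}^{2g} a_j = \bigl(\sum_{j=0}^{2g} a_j\bigr) - \sum_{j=0}^{g+k-1} a_j = h - \sum_{i=0}^{g+k-1} a_i$, I recover precisely the first bracketed term. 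Assembling the two pieces over the common denominator $q-1$ yields the stated formula.

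The care needed is mostly bookkeeping: verifying the index range where the plain generating-function expansion is valid (so that no $A_{2g-2-n}$ reflection term intrudes, which holds since $n = g-k < g$), and checking the re-indexing in the functional-equation substitution, especially the boundary terms $i=0$ and $i=g-k$ mapping to $j=2g$ and $j=g+k$. Once the symmetry $a_{2g-i} = q^{g-i} a_i$ is invoked correctly, the rest is a direct rearrangement, so I would present the derivation compactly and flag the functional equation as the essential input.
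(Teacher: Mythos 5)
Your proposal is correct and follows essentially the same route as the paper's proof: expand $Z(t)=L(t)/((1-t)(1-qt))$ to get $A_{g-k}=\sum_{i=0}^{g-k}\frac{q^{g-k-i+1}-1}{q-1}a_i$, then use the functional-equation symmetry $a_{2g-i}=q^{g-i}a_i$ together with $h=L(1)$ to convert the weighted sum into $q^{-k+1}\bigl(h-\sum_{i=0}^{g+k-1}a_i\bigr)$. The only slight inaccuracy is your worry about the validity range of the expansion: since $Z(t)=\sum_{m\geq 0}A_m t^m$ by definition, the closed form for $A_m$ holds for all $m\geq 0$ with no reflection term intruding, so no restriction to $n\leq g-1$ is needed (and equation (\ref{zeta1}) is itself a consequence of the functional equation, not a correction to the power-series expansion).
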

\begin{proof}

From
$$Z(t)
=\sum_{m=0}^{+\infty}A_mt^m= \frac{L(t)}{(1-t)(1-qt)}
=\frac{ \sum_{i=0}^{2g}a_it^i}{(1-t)(1-qt)}$$
we deduce that for all $m\geq 0$, 
$$A_m=\sum_{i=0}^{m} \frac{q^{m-i+1}-1}{ q-1}a_i.$$
In particular, $$(q-1)A_{g-k}=\sum_{i=0}^{g-k}(q^{g-k-i+1}-1)a_i.$$ 
Since 
$a_i=q^{i-g}a_{2g-i}$, for all $i=0,\ldots g$, we get
$$(q-1)A_{g-k}=q^{g-k+1}\sum_{i=0}^{g-k}q^{-i}a_i-\sum_{i=0}^{g-k}a_i=q^{g-k+1}\sum_{i=0}^{g-k}q^{-i}q^{i-g}a_{2g-i}-\sum_{i=0}^{g-k}a_i.$$ 
Hence $$(q-1)A_{g-k}=q^{-k+1}\sum_{i=0}^{g-k}(a_{2g-i}-a_i)-\sum_{i=0}^{g-k}a_i+q^{-k+1}\sum_{i=0}^{g-k}a_i.$$
Furthermore, we know that $h=L(1)=\sum_{i=0}^{2g}a_i$, therefore 
$$\sum_{i=0}^{g-k}(a_{2g-i}-a_i)=h-\sum_{i=0}^{g+k-1}a_i-\sum_{i=0}^{g-k}a_i,$$
which completes the proof. 
\end{proof}


\begin{theorem}\label{coropi}
Let $k\geq1$ be a fixed integer. 
If $\FF/\F_q$ is an algebraic function field such that 
$q\geq 3$ and $q^{-k+1}\sum_{i=0}^{g+k-1}a_i+\sum_{i=0}^{g-k}a_i\geq0$
(resp. $q=2$ and $q^{-k+1}\sum_{i=0}^{g+k-1}a_i+\sum_{i=0}^{g-k}a_i>0$),
then there exists a dimension zero divisor of degree $n=g-k$.
Moreover, the number $h_{n,0}$ of linearly independent divisors of degree
$n=g-k$ and dimension zero is such that
\begin{equation}\label{nbmax2}
h_{n,0} \geq h-\frac{1 }{ q-1}\left[ q^{-k+1}\left(h-\sum_{i=0}^{g+k-1}a_i\right)-\sum_{i=0}^{g-k}a_i\right].
\end{equation}

\end{theorem}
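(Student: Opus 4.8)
The plan is to apply the key lemma, Lemma \ref{pp}, which guarantees a dimension zero divisor of degree $n$ together with the bound $h_{n,0} \geq h - A_n$ whenever $A_n < h$. So the entire task reduces to computing $A_{g-k}$ explicitly and determining when the stated hypothesis forces $A_{g-k} < h$. Fortunately, Lemma \ref{intermsai} already furnishes the exact value
\begin{equation*}
A_{g-k}=\frac{1}{q-1}\left[ q^{-k+1}\left(h-\sum_{i=0}^{g+k-1}a_i\right)-\sum_{i=0}^{g-k}a_i\right],
\end{equation*}
so I would simply substitute this expression and verify the inequality $A_{g-k} < h$ algebraically.

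The main computation is to show that the hypothesis $q^{-k+1}\sum_{i=0}^{g+k-1}a_i+\sum_{i=0}^{g-k}a_i \geq 0$ (strict for $q=2$) is equivalent to $A_{g-k} < h$. Writing out $h - A_{g-k}$ using the formula above and clearing the factor $\frac{1}{q-1}$, I would rearrange to isolate the term the hypothesis controls. Concretely, $A_{g-k} < h$ is equivalent to $(q-1)h > q^{-k+1}\bigl(h - \sum_{i=0}^{g+k-1}a_i\bigr) - \sum_{i=0}^{g-k}a_i$, i.e.
\begin{equation*}
h\left(q-1-q^{-k+1}\right) > -q^{-k+1}\sum_{i=0}^{g+k-1}a_i - \sum_{i=0}^{g-k}a_i.
\end{equation*}
For $q \geq 3$ and $k \geq 1$ the coefficient $q - 1 - q^{-k+1}$ is at least $q - 1 - 1 = q - 2 > 0$, so the left side is strictly positive (since $h \geq 1$); the hypothesis makes the right side nonpositive, and the strict inequality follows. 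The boundary case $q=2$, $k=1$ gives $q - 1 - q^{-k+1} = 2 - 1 - 1 = 0$, which collapses the left side to zero, so there one needs the hypothesis to be strict in order to force a strict inequality on the right — this explains precisely why the theorem separates the $q=2$ case and requires the strict inequality there.

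Once $A_{g-k} < h$ is established, Lemma \ref{pp} immediately yields the existence of a dimension zero divisor of degree $g-k$ and the bound $h_{n,0} \geq h - A_{g-k}$; substituting the value of $A_{g-k}$ from Lemma \ref{intermsai} gives exactly inequality (\ref{nbmax2}). The only delicate point, and thus the main obstacle, is the careful bookkeeping of the boundary behavior of the coefficient $q-1-q^{-k+1}$: I would want to confirm that for every pair $(q,k)$ with $q \geq 3$, $k \geq 1$ it is strictly positive (so the non-strict hypothesis suffices), and that $q=2$, $k \geq 2$ also gives a positive coefficient $2 - 1 - 2^{-k+1} = 1 - 2^{1-k} > 0$ — meaning the strictness in the $q=2$ statement is genuinely needed only at $k=1$, but is stated uniformly for $q=2$ as a safe and clean hypothesis. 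This sign analysis is elementary but is the step where the precise form of the hypothesis is forced.
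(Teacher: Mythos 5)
Your proof is correct and follows exactly the paper's route: the paper's own proof is just the two-line observation that Lemma \ref{intermsai} combined with the hypothesis gives $A_{g-k}<h$, whence Lemma \ref{pp} yields both the existence claim and the bound (\ref{nbmax2}). Your spelled-out sign analysis of $h\left(q-1-q^{-k+1}\right)$ — in particular identifying $q=2$, $k=1$ as the only case where the coefficient vanishes and strictness of the hypothesis is genuinely required — is a correct and useful filling-in of the algebra the paper leaves implicit.
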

\begin{proof}
By Lemma  \ref{A_{g-k}=}, we have $A_{g-k}< h$. The result follows using 
Lemma \ref{pp}.
\end{proof}

\begin{example}
We can apply the previous theorem to classical  types of algebraic function fields, in particular a maximal function field $\FF$ over $\F_{q^2}$ 
or its descent over $\F_{q}$ if it exists. In the first case, one has $L(t)=(1+qt)^{2g}$ and
 \begin{equation*}\label{nbmax1} 
h_{n,0} \geq h-\frac{1 }{ q^2-1}\left[ q^{-2k+2}
\left(h-\sum_{i=0}^{g+k-1} 
\left(
\begin{array}{c}
2g \\
i
\end{array}
\right)
q^{i}
\right)-\sum_{i=0}^{g-k}\left(
\begin{array}{c}
2g \\
i
\end{array}
\right)
q^{i}\right].
\end{equation*}
In the latter, $L(t)=(1+qt^2)^g$ and
\begin{equation*}\label{nbmax3}
h_{n,0} \geq h-\frac{1 }{ q-1}\left[ q^{-k+1}
\left(h-\sum_{i=0}^{\lfloor \frac{g+k-1}{2}\rfloor} 
\left(
\begin{array}{c}
g \\
i
\end{array}
\right)
q^{i}
\right)-\sum_{i=0}^{\lfloor \frac{g-k}{2}\rfloor}\left(
\begin{array}{c}
g \\
i
\end{array}
\right)
q^{i}\right].
\end{equation*}
For instance, for the descent of the (maximal) genus $3$ hermitian field over $\F_3$, one gets
$h_{2,0} \geq 42$ and $h_{1,0} \geq 60$ (compare with  $h_{1,0} \geq 12$ obtained with formula (\ref{nbz}) of Theorem  \ref{princ}). 
\end{example}

In the particular case where $\FF/\F_q$ is hyperelliptic, one can give a necessary and sufficient condition in terms of the coefficients of $L$ to obtain a dimension zero divisor of degree $g-k$. 
\begin{theorem} \label{hyperelliptic}
If $\FF/\F_q$ is a hyperelliptic algebraic function field of genus $g>2$, the number $h_{g-k,0}$ of linearly independent divisors of degree $g-k$ and dimension zero is
$$h_{g-k,0}= \sum_{i=g-k+1}^g a_i + \sum_{i=g-k}^{g-1}q^{g-i} a_i
  + (q^k -1) \sum_{i=0}^{g-k-1} q^{g-i-k} a_i.$$
\end{theorem}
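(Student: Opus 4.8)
The plan is to compute $h_{g-k,0}$ by counting classes according to their dimension, exploiting the special structure of hyperelliptic function fields. First I would recall that $h_{g-k,0}$ denotes the number of degree $g-k$ classes of dimension zero, and that the total number of degree-$n$ classes is $h$ (since for a fixed degree the classes form a torsor under the degree-zero class group ${\mathcal C}(\FF/\F_q)^0$ of order $h$). Thus
\begin{equation*}
h_{g-k,0}=h-\sum_{j\geq 1} h_{g-k,j},
\end{equation*}
where $h_{g-k,j}$ counts degree $g-k$ classes of dimension exactly $j$. The essential input, which I would import from \cite{pell}, is that for a hyperelliptic field the effective divisors of a given degree that are special are governed by the hyperelliptic (degree two) class $\mathfrak{g}^1_2$: every class of positive dimension is obtained by adding multiples of this distinguished class to a class of dimension zero (or to the trivial class), so the special classes are parametrized in an explicit combinatorial way by the Zeta coefficients $a_i$.

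Next I would translate this structural description into a generating-function identity. The natural object is the relation between $A_{g-k}=\#\div^+_{g-k}(\FF/\F_q)$ and the class numbers $h_{g-k,j}$: since a class of dimension $j$ contains exactly $(q^j-1)/(q-1)$ effective divisors, one has
\begin{equation*}
A_{g-k}=\sum_{j\geq 1} \frac{q^j-1}{q-1}\, h_{g-k,j}.
\end{equation*}
Combining this with Lemma \ref{intermsai}, which already expresses $A_{g-k}$ in terms of the $a_i$, gives one linear relation among the $h_{g-k,j}$. In the hyperelliptic case the results of \cite{pell} should provide enough additional relations (indeed, an explicit determination of each $h_{g-k,j}$, or equivalently of the refined counts of special classes) to solve the system and isolate $h_{g-k,0}=h-\sum_{j\geq 1}h_{g-k,j}$. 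The three sums in the claimed formula, with the ranges $i\in[g-k+1,g]$, $i\in[g-k,g-1]$, and $i\in[0,g-k-1]$ and the weights $1$, $q^{g-i}$, and $(q^k-1)q^{g-i-k}$, strongly suggest that after substituting the functional-equation symmetry $a_i=q^{i-g}a_{2g-i}$ (used in Lemma \ref{intermsai}) the terms reorganize exactly into these three regimes according to how the index $i$ sits relative to $g-k$ and $g$.

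The main obstacle, as I see it, is faithfully extracting and applying the hyperelliptic input from \cite{pell}: the formula's shape shows that the count of special classes is not merely a function of $A_{g-k}$ but requires knowing, degree by degree, how the classes $[\,(g-k-2m)\cdot P_\infty + m\,\mathfrak{g}^1_2\,]$-type contributions distribute, which is precisely what the hyperelliptic theory supplies. Once that parametrization is in hand, the remainder is a bookkeeping computation: substitute into $h_{g-k,0}=h-\sum_{j\geq 1}h_{g-k,j}$, use $h=\sum_{i=0}^{2g}a_i$ together with $a_i=q^{i-g}a_{2g-i}$ to fold the high-index coefficients back onto low-index ones, and collect. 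I would carry out the index reorganization last, checking that the boundary indices $i=g-k$ and $i=g$ fall into the correct group so that the coefficients $q^{g-i}$ and $(q^k-1)q^{g-i-k}$ match at the seams; verifying these boundary cases is where a sign or off-by-one error would most likely hide, so I would confirm the formula against a small explicit example (for instance a genus $3$ hyperelliptic field with $k=1$) before declaring the computation complete.
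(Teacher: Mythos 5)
There is a genuine gap, and it sits exactly where you yourself flag the ``main obstacle.'' Your framework is right: $h_{g-k,0}=h-\sum_{j\geq 1}h_{g-k,j}$, and the hyperelliptic input must come from \cite{pell}. But the only quantitative relation you actually write down,
\begin{equation*}
A_{g-k}=\sum_{j\geq 1}\frac{q^j-1}{q-1}\,h_{g-k,j},
\end{equation*}
is a completely general fact (it holds for any function field, and is indeed what the paper uses in the proof of Proposition \ref{ordinary2}, not here). It is one linear equation in infinitely many unknowns $h_{g-k,j}$, so it cannot determine $\sum_{j\geq 1}h_{g-k,j}$, and no amount of manipulation of Lemma \ref{intermsai} will fix that. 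The missing ingredient is the explicit formula of \cite[Prop.4.3]{pell}: for a hyperelliptic field, $0\leq n\leq 2g-2$ and $i>0$,
\begin{equation*}
h_{n,i}=A_{n-2i+2}-(q+1)A_{n-2i}+qA_{n-2i-2}.
\end{equation*}
This is the precise quantitative form of the geometric picture you describe (positive-dimensional classes arising from the $\mathfrak{g}^1_2$), and it is what makes the whole computation work: summing it over $i\geq 1$ telescopes (using $A_i=0$ for $i<0$) to give
\begin{equation*}
\sum_{i\geq 1}h_{g-k,i}=A_{g-k}-qA_{g-k-2},
\qquad\text{hence}\qquad
h_{g-k,0}=h-\bigl(A_{g-k}-qA_{g-k-2}\bigr).
\end{equation*}
Your plan never states this formula, never derives the telescoped identity, and defers both to ``the results of \cite{pell} should provide enough additional relations'' --- which is precisely the step that constitutes the proof.

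Note also that the formula you need involves \emph{two} effective-divisor counts, $A_{g-k}$ and $A_{g-k-2}$, so Lemma \ref{intermsai} must be applied at two degrees (or once, together with the relation $A_m=\sum_{i\leq m}\frac{q^{m-i+1}-1}{q-1}a_i$), before substituting $h=\sum_{i=0}^{g}a_i+\sum_{i=0}^{g-1}q^{g-i}a_i$ and collecting terms into the three ranges $i>g-k$, $i=g-k,\dots,g-1$ (after folding via $a_{2g-i}=q^{g-i}a_i$), and $i<g-k$. That final bookkeeping you also leave undone, though with the telescoped identity in hand it is routine. As it stands, your submission is a correct strategy outline with the pivotal lemma and the entire computation missing, not a proof.
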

\begin{proof}
We use results from \cite[Sec.4]{pell}. Let $h_{n,i}$ be the number of classes of divisors of degree $n$ and of dimension $i \geq 0$ of $\FF/\F_q$. By \cite[Prop.4.3]{pell}, for $0 \leq n \leq 2g-2$ and $i>0$, one has 
$$h_{n,i}=A_{n-2i+2}-(q+1)A_{n-2i} + q A_{n-2i-2}.$$ 
Now for any $n$, $$h=\sum_{i=0}^{\infty} h_{n,i}$$
so $$h_{n,0} = h- \sum_{i=1}^{\infty} h_{n,i}.$$
By the expression of $h_{n,i}$ for $i>0$ above and the fact that $A_i=0$ if $i<0$ we get
$$h_{g-k,0}= h- (A_{g-k}-q A_{g-k-2}).$$
Using the expressions of $A_{g-k}$ from Lemma \ref{intermsai} and 
$$h=\sum_{i=0}^g a_i + \sum_{i=0}^{g-1} q^{g-i} a_i$$
after some simplifications we get the desired equality.
\end{proof}

\begin{remark} \label{nonsufficient}
However, in general, the knowledge of the Zeta function is not sufficient to characterize the existence of a zero dimension divisor. For instance the genus $3$ function field $\FF_1/\F_2 :  y^2 + y = x^7 + x^6 + 1$ has the same $L$-polynomial $L(t)= 8 t^6 - 8 t^5 + 4 t^4 - 2 t^3 + 2 t^2 - 2 t + 1$
as $\FF_2/\F_2 :  y^3+x^2 y^2+ (x^3+1) y=x^4+x^3+1$. Now from \cite[Rem.12]{balb}, $\FF_2$ has no degree $2$ divisor of dimension zero whereas $\FF_1$ (which is hyperelliptic) has by Theorem \ref{hyperelliptic}.
\end{remark}

\section{Particular cases : the cases of $\F_2$ and $\F_3$} \label{ex}
\subsection{Assumptions on the $p$-rank}
Let $\CC/k$ be a genus $g$  (smooth projective absolutely irreducible)  curve over a finite field $k=\F_{p^n}$.  
Classically, one defines the \emph{$p$-rank} $\gamma$ of this curve as the integer $0 \leq \gamma \leq g$ such that $\# \Jac(\CC)[p](\overline{k})=p^{\gamma}$. In particular 
$\CC$ is said to be \emph{ordinary} if $\gamma=g$.  There is another equivalent characterization in terms of the $L$-polynomial, 
namely $\gamma=\deg(L(t) \pmod{p})$ (see \cite{mani}). In particular, $\CC$ is ordinary if and only if $p$ does not divide $a_g$. 
\begin{proposition} \label{ordinary}
Let $\CC$ be an ordinary curve of genus $g>0$ over a finite field $k$ of characteristic $2$.
There is always a  degree $g-1$ zero dimension divisor on $\CC$.
\end{proposition}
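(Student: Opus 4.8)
The plan is to show that $h_{g-1,0}$, the number of degree $g-1$ classes of dimension zero, is \emph{odd}; this immediately forces $h_{g-1,0}\geq 1$ and hence the existence of a dimension zero (equivalently non-special) divisor of degree $g-1$. Note first that Lemma \ref{pp} is of no use here: as the genus $3$ example $\FF_1/\F_2$ discussed in the remark after Lemma \ref{pp} already shows (there $A_{g-1}=A_2=9>8=h$), an ordinary curve over $\F_2$ may perfectly well satisfy $A_{g-1}\geq h$, so that the crude bound $h_{g-1,0}\geq h-A_{g-1}$ becomes vacuous. Instead I would exploit the characterization of ordinariness recalled just above, namely that in characteristic $2$ the curve $\CC$ is ordinary if and only if $a_g$ is odd, and run a parity argument.

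The key step is a congruence between $A_{g-1}$ and the number $B=\sum_{i\geq 1}h_{g-1,i}$ of degree $g-1$ classes that contain an effective divisor, so that $h_{g-1,0}=h-B$. Since a class of dimension $i$ contains exactly $\frac{q^{i}-1}{q-1}$ effective divisors, one has
\begin{equation*}
A_{g-1}=\sum_{i\geq 1}h_{g-1,i}\,\frac{q^{i}-1}{q-1}.
\end{equation*}
Writing $q=2^{n}$, the point is that $\frac{q^{i}-1}{q-1}=1+q+\cdots+q^{i-1}\equiv 1 \pmod 2$ for every $i\geq 1$, whence $A_{g-1}\equiv\sum_{i\geq 1}h_{g-1,i}=B\pmod 2$, and therefore
\begin{equation*}
h_{g-1,0}=h-B\equiv h-A_{g-1}\pmod 2.
\end{equation*}

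It then remains to compute $h-A_{g-1}$ modulo $2$ in terms of the $L$-polynomial. Using Lemma \ref{intermsai} with $k=1$ (or directly the expansion $A_{g-1}=\sum_{i=0}^{g-1}\frac{q^{g-i}-1}{q-1}a_i$ obtained in its proof), the same observation $\frac{q^{g-i}-1}{q-1}\equiv 1\pmod 2$ gives $A_{g-1}\equiv\sum_{i=0}^{g-1}a_i\pmod 2$. On the other hand, the functional equation $a_{2g-i}=q^{\,g-i}a_i$ shows that $a_{2g-i}$ is even for $0\leq i\leq g-1$, so $h=\sum_{i=0}^{2g}a_i\equiv a_g+\sum_{i=0}^{g-1}a_i\pmod 2$. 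Combining the two, $h-A_{g-1}\equiv a_g\pmod 2$, and hence $h_{g-1,0}\equiv a_g\pmod 2$. Since $\CC$ is ordinary in characteristic $2$, $a_g$ is odd, so $h_{g-1,0}$ is odd and in particular nonzero, which proves the proposition.

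The only delicate point, and the real crux of the argument, is the first congruence $A_{g-1}\equiv B\pmod 2$: it is exactly what lets one bypass the failure of Lemma \ref{pp} by replacing the (generally false) inequality $A_{g-1}<h$ with a parity statement that is insensitive to how the effective divisors are distributed among the higher dimensional classes. Everything else is routine manipulation of the functional equation, and the characteristic $2$ and ordinary hypotheses enter only at the very end, through the parity of $a_g$.
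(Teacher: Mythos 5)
Your proof is correct, but it takes a genuinely different route from the paper's own proof of Proposition \ref{ordinary} --- and, interestingly, it is essentially the argument the paper deploys later for the more general Proposition \ref{ordinary2}. The paper's proof here is geometric and constructive: in characteristic $2$ any nonzero exact differential $df$ has divisor $(df)=2D_0$, defining the canonical theta characteristic class of degree $g-1$; by St\"ohr--Voloch, ${\mathcal L}(D_0)$ is in bijection with the exact regular differentials, i.e.\ the kernel of the Cartier operator (Serre), and ordinariness makes the Cartier operator bijective, so $\dim(D_0)=0$. Your argument is instead a counting/parity one: the identity $A_{g-1}=\sum_{i\geq 1}h_{g-1,i}\frac{q^i-1}{q-1}$, reduced mod $2$ using that $q$ is even, combined with the expansion of $A_{g-1}$ from Lemma \ref{intermsai} and the functional equation, yields $h_{g-1,0}\equiv a_g \pmod 2$, and ordinariness in characteristic $2$ is exactly the statement that $a_g$ is odd. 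This is the specialization to $p=2$, $\gamma=g$ of the congruence $h_{\gamma-1,0}\equiv a_\gamma \not\equiv 0 \pmod p$ that the paper establishes in proving Proposition \ref{ordinary2}; all your individual steps check out (the count of effective divisors in a class of dimension $i$, the parity of $\frac{q^i-1}{q-1}$, and the evenness of $a_{2g-i}=q^{g-i}a_i$ for $i\leq g-1$). What the paper's proof of Proposition \ref{ordinary} buys is an explicit divisor $D_0$ --- the paper notes right afterwards that this gives a way to construct the degree $g-1$ dimension zero divisor, which your existence argument does not. What your proof buys is elementarity (no Cartier operator or theta characteristic), a slightly sharper conclusion ($h_{g-1,0}$ is odd, not merely nonzero), and immediate generalizability to arbitrary characteristic and $p$-rank, i.e.\ your method proves Proposition \ref{ordinary2} with no additional work. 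Your preliminary observation that Lemma \ref{pp} cannot suffice --- since an ordinary curve over $\F_2$ may have $A_{g-1}\geq h$, as in the genus $3$ example $\FF_1/\F_2$ --- is also correct and consistent with the paper's remark following that lemma.
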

\begin{proof}
Let $f \in k(\CC)$ such that $df \ne 0$. Developing $f$ in power series at any point of $\CC$, we see that $df$ has only zeros and poles of even multiplicity. 
Hence there exists a rational divisor of degree $(2g-2)/2=g-1$ such that  $(df)=2 D_0$. It is easy to show that the class of this divisor does not depend on the choice of $f$ 
and it is called the \emph{canonical theta characteristic divisor}. In \cite[Prop.3.1]{stvo}, it is shown that there is a bijection between ${\mathcal L}(D_0)$ and the space of  exact regular 
differentials (i.e. the regular differentials $\omega$ such that $\omega=df$ for $f \in k(\CC)$) . Now by \cite[Prop.8]{serr}, a regular differential $\omega$ is exact if and only 
if ${\mathcal C}(\omega)=0$ where ${\mathcal C}$ is the Cartier operator.  Moreover by  \cite[Prop.10]{serr}, $\Jac(\CC)$ is ordinary if and only if ${\mathcal C}$ is bijective. 
So the only exact regular differential is $0$ and $\dim(D_0)=0$. Hence $D_0$ is the divisor we were looking for.
\end{proof}

\begin{corollary}
Let $\CC/\F_2$ be an ordinary curve of genus $g>0$. Assume that $\# \CC(\F_2)>0$ then for all $k>0$, there exists a dimension zero divisor of degree $g-k$.
\end{corollary}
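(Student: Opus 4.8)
The plan is to combine the two results that immediately precede this corollary. By Proposition \ref{ordinary}, an ordinary curve of genus $g>0$ over a field of characteristic $2$ always carries a degree $g-1$ dimension zero divisor; since $\CC/\F_2$ is ordinary, I obtain such a divisor $D$ of degree $g-1$ on $\CC$. This is the source of the extremal (borderline) dimension zero divisor that Lemma \ref{induction} requires as its hypothesis.

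Next I would translate back to the function field language: let $\FF=\F_2(\CC)$ be the function field of $\CC$, of genus $g$. The divisor $D$ of degree $g-1$ furnished by Proposition \ref{ordinary} is a dimension zero divisor in $\div_{g-1}(\FF/\F_2)$. The remaining hypothesis of Lemma \ref{induction} is that $B_1(\FF/\F_2)>0$, i.e.\ that there is at least one place of degree one. But a place of degree one corresponds exactly to an $\F_2$-rational point on $\CC$, and the assumption $\#\CC(\F_2)>0$ guarantees $B_1(\FF/\F_2)>0$.

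Finally I would invoke Lemma \ref{induction} directly: with a degree $g-1$ dimension zero divisor in hand and $B_1(\FF/\F_2)>0$, the lemma yields, for every $k>0$, a dimension zero divisor of degree $g-k$. This completes the proof. The argument is essentially a bookkeeping step stitching together Proposition \ref{ordinary} (which supplies the $k=1$ case) and Lemma \ref{induction} (which propagates it to all larger $k$). I do not anticipate a genuine obstacle here; the only point to be careful about is matching the hypotheses precisely—namely that ordinarity over $\F_2$ is exactly what Proposition \ref{ordinary} needs, and that the point count condition $\#\CC(\F_2)>0$ is precisely the $B_1>0$ condition of Lemma \ref{induction}. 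No new estimates are required.
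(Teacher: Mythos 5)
Your proof is correct and is exactly the argument the paper intends: the corollary follows immediately by combining Proposition \ref{ordinary} (which supplies the degree $g-1$ dimension zero divisor) with Lemma \ref{induction}, using $\#\CC(\F_2)>0$ to verify $B_1(\FF/\F_2)>0$. Nothing is missing.
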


Note, that the previous proof gives a way to explicitly construct a degree $g-1$ divisor of dimension zero.  We will now generalize  Proposition \ref{ordinary} but without such an explicit construction.

\begin{proposition} \label{ordinary2}
Let $\CC$ be a curve of genus $g>0$ over a finite field $\F_q$ of characteristic $p$ and of $p$-rank $\gamma$.
There is always a  degree $\gamma-1$ zero dimension divisor on $\CC$.
\end{proposition}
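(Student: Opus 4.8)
The plan is to count effective divisor \emph{classes} rather than effective divisors, and to extract the conclusion from a single congruence modulo $p=\car\F_q$ obtained by reducing the Zeta function. The first thing to notice is that the obvious strategy — applying Lemma \ref{pp} by verifying $A_{\gamma-1}<h$ — cannot work: the Remark following Lemma \ref{pp} already records an ordinary genus $3$ field over $\F_2$ (so $\gamma=g=3$) with $h=8$ but $A_{2}=9>h$, in which a dimension zero divisor of degree $\gamma-1=2$ nevertheless exists. So I must instead control the number $N_m$ of degree $m$ classes that contain an effective divisor, an integer which is automatically bounded by $h=\#\,\mathcal C(\FF/\F_q)^{0}$.

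First I would record the class-by-class count
\[
A_m=\sum_{[D]:\,\deg[D]=m}\frac{q^{\dim(D)}-1}{q-1},
\]
the summand being the number of effective divisors in the class $[D]$. Since $p\mid q$, one has $\frac{q^{\ell}-1}{q-1}=1+q+\dots+q^{\ell-1}\equiv 1\pmod p$ for $\ell\geq 1$ and $=0$ for $\ell=0$, so each effective class contributes $1$ and each non-effective class contributes $0$ modulo $p$. This yields the two facts I need:
\[
N_m\equiv A_m\pmod p,\qquad N_m\leq h.
\]
Trading the possibly large integer $A_m$ for the bounded integer $N_m$, at the cost of retaining only information mod $p$, is the decisive maneuver.

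Next I would compute $A_{\gamma-1}\bmod p$ from the Zeta function. By definition $\gamma=\deg(L\bmod p)$, so $L(t)\equiv\sum_{i=0}^{\gamma}\bar a_i t^i$ with $\bar a_\gamma\neq 0$, while $(1-t)(1-qt)\equiv 1-t\pmod p$. Hence
\[
\sum_m A_m t^m=\frac{L(t)}{(1-t)(1-qt)}\equiv\Bigl(\textstyle\sum_{i=0}^{\gamma}\bar a_i t^i\Bigr)\sum_{j\geq 0}t^j\pmod p,
\]
giving $A_m\equiv\sum_{i=0}^{\min(m,\gamma)}\bar a_i\pmod p$; in particular $A_{\gamma}\equiv\bar L(1)\equiv h$ and $A_{\gamma-1}\equiv h-\bar a_\gamma\pmod p$. (The same two congruences can be read directly off Lemma \ref{intermsai}.) Combining with the previous paragraph, $N_{\gamma-1}\equiv A_{\gamma-1}\equiv h-\bar a_\gamma\pmod p$, and since $\bar a_\gamma\neq 0$ in $\F_p$ we get $N_{\gamma-1}\not\equiv h\pmod p$, so $N_{\gamma-1}\neq h$. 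Together with $N_{\gamma-1}\leq h$ this forces $N_{\gamma-1}<h$, i.e.\ some class of degree $\gamma-1$ contains no effective divisor, which is exactly a dimension zero divisor of degree $\gamma-1$. (The case $\gamma=0$ is trivial, as any divisor of degree $-1$ has dimension zero.)

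The main obstacle, to my mind, is conceptual rather than computational: one has to resist proving $A_{\gamma-1}<h$ (which is genuinely false in general) and instead pass to the count $N_{\gamma-1}$ of effective \emph{classes}, for which the bound $N_{\gamma-1}\le h$ comes for free and a lone mod-$p$ congruence then closes the argument. Everything else — the per-class local count and the reduction of $Z(t)$ — is routine once this idea is in place; the only delicate point to keep in view is that the definition $\gamma=\deg(L\bmod p)$ is precisely what guarantees $\bar a_\gamma\neq 0$, and this nonvanishing is the single input that pushes the congruence class of $N_{\gamma-1}$ away from that of $h$.
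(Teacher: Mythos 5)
Your proof is correct and follows essentially the same route as the paper's: both count effective divisors class by class via $\frac{q^{\dim(D)}-1}{q-1}$, reduce modulo $p$ so that only the number of effective classes survives, and use $\bar{a}_\gamma \neq 0$ (i.e.\ $\gamma = \deg(L \bmod p)$) to conclude that $h_{\gamma-1,0} \equiv a_\gamma \not\equiv 0 \pmod{p}$, hence is positive. The only cosmetic difference is that you compute $A_{\gamma-1} \bmod p$ directly from the generating function $Z(t) \equiv \bar{L}(t)/(1-t) \pmod{p}$, whereas the paper routes the same congruence through the explicit formula of Lemma \ref{intermsai}.
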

\begin{proof}
With the same notation as in the proof of Theorem \ref{hyperelliptic}, for all $g \geq  k>0$ we get
$$h=\sum_{i=0}^{\infty} h_{g-k,i}$$
so $$h_{g-k,0} = h- \sum_{i=1}^{\infty} h_{g-k,i}.$$
Now $$A_{g-k}= \sum_{i=1}^{\infty} \frac{q^i-1}{q-1} h_{g-k,i}$$
hence we can write
$$\sum_{i=1}^{\infty} h_{g-k,i}=  \sum_{i=1}^{\infty} q^i h_{g-k,i}- (q-1) A_{g-k} .$$
Using the expression of $A_{g-k}$ from Lemma \ref{intermsai} and then 
$$h=\sum_{i=0}^{g+k-1} a_i + \sum_{i=g+k}^{2g}  a_i \equiv \sum_{i=0}^{\gamma} a_i \pmod{p}$$
we get for $k=g-\gamma+1$
\begin{eqnarray*}
h_{g-k,0} &=& h (1+q^{-k+1})  - q^{-k+1} \sum_{i=0}^{g+k-1} a_i - \sum_{i=0}^{g-k} a_i - \sum_{i=1}^{\infty} q^i h_{g-k,i} \\
&=&  \sum_{i=0}^{2 g} a_i+q^{-k+1}  \sum_{i=g+k}^{2g} a_i- \sum_{i=0}^{g-k} a_i- \sum_{i=1}^{\infty} q^i h_{g-1,i} \\
&=&  \sum_{i=g-k+1}^{ 2g } a_i+q^{-k+1}  \sum_{i=g+k}^{2g} a_i- \sum_{i=1}^{\infty} q^i h_{g-1,i} \\
& \equiv &  \sum_{i=g-k+1}^{ \gamma } a_i \pmod{p} \\
& \equiv & a_{\gamma} \not \equiv 0 \pmod{p}.
\end{eqnarray*} 
Hence $h_{\gamma-1,0}$ is not zero and hence is positive.
\end{proof}

\begin{remark}

Note that this proposition is interesting only in the case where $q=2$ and  $\gamma=g-k$ with $k \leq 3$ or $q=3$ and  $\gamma=g$.
Indeed, Corollary  \ref{kmin} gives a better result for the other values of $q$ and $k$.

\end{remark}

%

\subsection{Assumption on the number of rational points }
As we have seen in Proposition \ref{g}, if we know that there are many ($>g$) degree one places then there is always a dimension zero divisor of degree $g-k$ for all $k>0$. We want to relax the hypothesis in the case of $q=2$ for $k>1$.
To do so, we need the following lemma which can be found
in \cite{nixi}.

\begin{lemma}\label{lemnumdiveffect}
If $B_1(\FF/\F_q) \geq m\geq1$, then for all $n\geq 2$ one has $$A_n\geq mA_{n-1}-\frac{m(m-1)}{2}A_{n-2}.$$
\end{lemma}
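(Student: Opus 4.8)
The plan is to count effective divisors of degree $n$ by adjoining a degree one place to an effective divisor of degree $n-1$, and to control the resulting overcounting by an inclusion--exclusion argument. First I would fix $m$ distinct degree one places $P_1,\dots,P_m \in \bP_1(\FF/\F_q)$, which exist since $B_1(\FF/\F_q) \geq m$, and consider the map sending a pair $(i,D)$, with $1 \leq i \leq m$ and $D \in \div_{n-1}^+(\FF/\F_q)$, to the effective divisor $D + P_i$ of degree $n$. The domain of this map has cardinality $m A_{n-1}$, so computing its fibers over $\div_n^+(\FF/\F_q)$ will relate $m A_{n-1}$ to $A_n$.

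For an effective divisor $E$ of degree $n$, the fiber over $E$ consists precisely of those indices $i$ for which $E - P_i$ is effective, i.e. for which $v_{P_i}(E) \geq 1$; denote the number of such indices by $r(E)$. Summing over all $E \in \div_n^+(\FF/\F_q)$ then yields the identity $m A_{n-1} = \sum_E r(E)$, and since $A_n = \sum_E 1$ we obtain $m A_{n-1} - A_n = \sum_E \bigl(r(E)-1\bigr)$. The elementary inequality I would invoke is $r - 1 \leq \binom{r}{2}$, valid for every integer $r \geq 0$, since $\binom{r}{2} - (r-1) = \tfrac{(r-1)(r-2)}{2} \geq 0$ as a product of two consecutive integers. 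This gives $m A_{n-1} - A_n \leq \sum_E \binom{r(E)}{2}$.

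Finally I would interpret the right-hand side combinatorially. The quantity $\binom{r(E)}{2}$ counts the unordered pairs $\{i,j\}$ with $P_i, P_j \in \supp(E)$, so that
$$\sum_E \binom{r(E)}{2} = \sum_{1 \leq i < j \leq m} \#\{E \in \div_n^+(\FF/\F_q) : v_{P_i}(E) \geq 1,\ v_{P_j}(E) \geq 1\}.$$
For each fixed pair the map $E \mapsto E - P_i - P_j$ is a bijection onto $\div_{n-2}^+(\FF/\F_q)$, so each inner count equals $A_{n-2}$ and the total is $\binom{m}{2} A_{n-2} = \tfrac{m(m-1)}{2} A_{n-2}$. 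Substituting into the previous display gives $m A_{n-1} - A_n \leq \tfrac{m(m-1)}{2} A_{n-2}$, which is the asserted inequality.

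The argument is essentially a counting computation, so the main point requiring care is the bookkeeping of the fibers: one must track multiplicities correctly, note that a divisor $E$ not containing any $P_i$ contributes $r(E)=0$ (so that the summand $r(E)-1$ is negative there, which is harmless), and verify that the elementary inequality $r-1 \leq \binom{r}{2}$ holds uniformly including the degenerate cases $r \in \{0,1\}$. Everything else reduces to the two bijections $D \mapsto D+P_i$ and $E \mapsto E - P_i - P_j$ between effective divisors of consecutive degrees, which are straightforward.
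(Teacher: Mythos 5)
Your proof is correct and complete. Note that the paper does not actually prove this lemma: it is quoted from \cite{nixi}, so there is no internal argument to compare against. Your proof is the standard one (and essentially the one in the cited reference): a truncated inclusion--exclusion over the sets $S_i=\{E\in\div_n^+(\FF/\F_q)\ :\ E\geq P_i\}$, each of size $A_{n-1}$, whose pairwise intersections have size $A_{n-2}$, giving $A_n\geq \#\bigcup_i S_i\geq mA_{n-1}-\binom{m}{2}A_{n-2}$. Your fiber-counting formulation, with the elementary inequality $r-1\leq\binom{r}{2}$ playing the role of the Bonferroni bound, is airtight: the two bijections $D\mapsto D+P_i$ and $E\mapsto E-P_i-P_j$ are valid (the places $P_i$ are distinct, and $n\geq 2$ guarantees that $\div_{n-2}^+(\FF/\F_q)$ makes sense, with $A_0=1$ counting the empty divisor), and you correctly verify the degenerate cases $r(E)\in\{0,1\}$ where the summand $r(E)-1$ is nonpositive.
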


Then the following result holds and improves Proposition \ref{g}.
\begin{theorem}\label{unefois}
If $q=2$, ($g\geq 3$) and $B_1(\FF/\F_2)\geq 3$ then $$A_{g-k}<h(\FF/\F_2)$$ 
for any integer $k\geq2$. Therefore there exists a divisor
of degree $g-k$ and dimension zero for any $k\geq 2$.
\end{theorem}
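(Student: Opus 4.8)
The plan is to bound the number $A_{g-k}$ of effective divisors of degree $g-k$ strictly below the class number $h$, after which Lemma \ref{pp} immediately yields the desired dimension zero divisor. The first observation I would make is that under the hypothesis $B_1(\FF/\F_2)\geq 3$ the sequence $(A_n)_{n\geq 0}$ is non-decreasing: applying Lemma \ref{lemnumdiveffect} with $m=1$ gives $A_n\geq A_{n-1}$ for every $n\geq 2$, while $A_1=B_1\geq 3>1=A_0$ settles the remaining step. Consequently $A_{g-k}\leq A_{g-2}$ for all $k\geq 2$, so it suffices to prove the single inequality $A_{g-2}<h$.

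For that, I would start from inequality (\ref{zeta2}) specialised to $q=2$, which reads
\[
S:=2\sum_{n=0}^{g-2}2^{(g-1-n)/2}A_n+A_{g-1}\leq \frac{h}{(\sqrt{2}-1)^2}=(3+2\sqrt{2})\,h .
\]
Isolating the top terms we have $S=A_{g-1}+2\sqrt{2}\,A_{g-2}+4\,A_{g-3}+R$, where $R\geq 0$ collects the remaining (nonnegative) summands. The key step is to feed in the growth estimate of Lemma \ref{lemnumdiveffect}, now with $m=3$ (legitimate since $g\geq 3$ makes $g-1\geq 2$), namely $A_{g-1}\geq 3A_{g-2}-3A_{g-3}$. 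Substituting this into $S$, the coefficient of $A_{g-2}$ becomes exactly $3+2\sqrt{2}$, the $A_{g-3}$ term survives with coefficient $4-3=1$, and everything else stays nonnegative; using $A_{g-3}\geq A_0=1$ one gets $S\geq(3+2\sqrt{2})A_{g-2}+1$. Comparing with the upper bound $S\leq(3+2\sqrt{2})h$ yields $(3+2\sqrt{2})A_{g-2}<(3+2\sqrt{2})h$, hence $A_{g-2}<h$.

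Finally, combining $A_{g-k}\leq A_{g-2}<h$ with Lemma \ref{pp} produces a divisor of degree $g-k$ and dimension zero for every $k\geq 2$, which is the assertion.

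The step I expect to be the crux is the exact matching of constants in the second paragraph: the trivial bound obtained by discarding all but the single term $2\sqrt{2}\,A_{g-2}$ only gives a bound of the shape $A_{g-2}\leq c\,h$ with $c>1$, so the result genuinely depends on recovering the full coefficient $3+2\sqrt{2}$. This is precisely what the recursion with $m=3$ supplies, and it is also why the hypothesis $B_1\geq 3$ is sharp here: with $m=2$ one would only reach $2+2\sqrt{2}<3+2\sqrt{2}$ and the argument would fail. The minor point to watch is that the strict inequality (rather than merely $A_{g-2}\leq h$) must be extracted, which is why I keep the surplus term $A_{g-3}\geq 1$ rather than discarding it.
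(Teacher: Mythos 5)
Your proof is correct and takes essentially the same route as the paper's: both start from inequality (\ref{zeta2}) specialized to $q=2$, keep the terms in $A_{g-1},A_{g-2},A_{g-3}$, feed in Lemma \ref{lemnumdiveffect} with $m=3$ at $n=g-1$ to recover the exact coefficient $3+2\sqrt{2}$, and retain $A_{g-3}\geq 1$ to obtain the strict inequality $A_{g-2}<h$. The only difference is at the end: the paper invokes Lemma \ref{induction} to descend to lower degrees, while you use monotonicity of $(A_n)$ (the $m=1$ case of Lemma \ref{lemnumdiveffect}) together with Lemma \ref{pp}, which has the minor advantage of actually establishing the stated inequality $A_{g-k}<h$ for every $k\geq 2$ rather than only the existence claim.
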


\begin{proof}
From the inequality (\ref{zeta2}), we obtain 
\begin{equation}
\label{zeta4}
4A_{g-3}+2\sqrt{2}A_{g-2}+A_{g-1}\leq \frac{h}{(\sqrt{2}-1)^2}=(3+2\sqrt{2})h.
\end{equation}
Assume  that $B_1(\FF/\F_2)\geq m=3$, then by Lemma \ref{lemnumdiveffect} applied with $n=g-1$, we have $A_{g-1}+3A_{g-3}\geq 3 A_{g-2}$. 
Hence, using the inequality (\ref{zeta4}), we obtain $$A_{g-3}+(3+2\sqrt{2})A_{g-2}\leq (3+2\sqrt{2})h.$$
Moreover, it is clear that $A_{g-3}\geq 1$ because if $g=3$, $A_{g-3}=A_0=1$ and if $g>3$, $A_{g-3}\geq B_1(\FF/\F_2)=m=3$. 
Hence, we deduce that if $B_1(\FF/\F_2)\geq 3$ and $g\geq 3$, then we have $A_{g-2}<h$. We can  apply Lemma \ref{induction} to get the result.
\end{proof}

\section{Density of dimension zero divisors}\label{density}

\subsection{General result}

In many situations, dimension zero divisors are needed.  For example, the bilinear multiplication algorithm of D.Chudnovsky
and G. Chudnovsky (see \cite{chch}) requires the
random choice of good divisors to set up the algorithmic infrastructure. 
In this context, we draw at random a divisor until
we obtain a divisor having the needed properties.   
From an algorithmic point of view, one can ask what  the expected complexity is to construct the required divisors.  
Until now, it is not at all clear that this construction 
is practical (cf. \cite[Rem. 5]{shtsvl}). 
However, the experiments show that in each particular case,
this random draw quickly gives us  a solution. The following result
explains why this method works.

\begin{proposition}\label{proba}
Let ${\div}_n(\FF/\F_q)$ be the set of divisors of degree $n=g-k$ with $k\geq 1$, provided with the equiprobability distribution. 
If $k\geq -2\log_q(C_q)$ then the probability to draw a degree $g-k$ divisor of dimension zero is greater than or equal to $\left(1-\frac{1}{l_q\left(\strut k\right)}\right)$ 
where $l_q\left(\strut k\right)=C_qq^{\frac{k}{2}}$ .
\end{proposition}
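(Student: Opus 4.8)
The plan is to recognize that Proposition \ref{proba} is essentially a probabilistic restatement of Theorem \ref{princ}, so the real work lies in setting up the probability space correctly rather than in any new estimate. The first observation I would make is that the property of having dimension zero is invariant under linear equivalence: as recorded in the preliminaries, $D_1 \sim D_2$ implies $\dim(D_1)=\dim(D_2)$. Consequently the event ``$D$ has dimension zero'' is a union of full divisor classes, and the natural finite model for drawing at random from $\div_n(\FF/\F_q)$ is to draw a divisor \emph{class} of degree $n$ uniformly. By F.K. Schmidt's theorem there is a degree-one divisor $D_0$, and the map $[D]\mapsto[D-nD_0]$ identifies the set of degree-$n$ classes with $\Jac(\FF/\F_q)(\F_q)$, a finite group of order $h$. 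Equipping this set with the uniform distribution gives each class probability $1/h$.

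Next I would count the favorable classes. By the very definition of $h_{n,0}$ there are exactly $h_{n,0}$ classes of degree $n=g-k$ and dimension zero, so the probability to draw a dimension zero divisor is $h_{n,0}/h$.

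The final step is to invoke Theorem \ref{princ}. The hypothesis $k\geq -2\log_q(C_q)$ is precisely condition (\ref{Cq}), so the theorem applies and yields $h_{n,0}\geq h\left(1-\frac{1}{l_q(k)}\right)+\Delta_q$. Dividing by $h$ and discarding the nonnegative term $\Delta_q/h$ gives $h_{n,0}/h\geq 1-\frac{1}{l_q(k)}$, which is exactly the claimed lower bound on the probability.

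The only genuine subtlety, and hence the step I would be most careful about, is the passage from the infinite set $\div_n(\FF/\F_q)$ to a finite equiprobable model. One cannot literally place a uniform distribution on an infinite set, so the statement must be read as drawing uniformly among the $h$ divisor classes of degree $n$ (equivalently, among the $\F_q$-points of the Jacobian); the class-invariance of $\dim$ is what guarantees both that this is the correct reduction and that ``drawing a dimension zero divisor'' is a well-defined event. Once this interpretation is granted, the proposition follows immediately from Theorem \ref{princ} with no further computation.
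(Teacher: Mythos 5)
Your proposal is correct and follows essentially the same route as the paper: reduce the draw to the $h$ divisor classes of degree $n$ (using class-invariance of the dimension), count the favorable classes as $h_{n,0}$, and apply the bound of Theorem \ref{princ}, discarding the positive term $\Delta_q$. The only difference is that you make explicit the interpretation of ``equiprobability'' on the infinite set $\div_n(\FF/\F_q)$ as uniformity on classes, a point the paper's proof asserts without elaboration.
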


\begin{proof}
 For any integer $n$, the probability to draw a  dimension zero divisor 
among the divisors of degree $n$ is equal to the probability to draw a divisor
 class of dimension zero among the classes of degree $n$.  
We have seen in Theorem \ref{princ} that the number of classes of
linearly independent divisors of degree $n=g-k$ and dimension zero
is 
$$h_{n,0}\geq h\left(1-\frac{1}{l_q\left(\strut k\right)}\right)+\Delta_q>h\left(1-\frac{1}{l_q\left(\strut k\right)}\right).$$
Since the number of classes of degree $n$ is  equal to $h$, we get then result.
\end{proof}

Note that this probability does not depend
upon the value of $g$ and tends to $1$ when $k$ is growing
to infinity. In particular $g$ and $k$ can grow simultaneously
to infinity. For example we can take 
$k =\lfloor \log_q(g) \rfloor$ which satisfies the inequality (\ref{Cq})
and which tends to infinity when $g$ is growing to infinity.

For practical cases, using Proposition \ref{proba}, we see
that it is not necessary to take a very large $k$
to obtain a probability very close to $1$.
For example if $q=16$ and $k=3$ which is
a rather small value, the probability to draw
a divisor of degree $g-3$ and dimension zero 
is $\geq \frac{287}{288}\simeq 0.996$.
If $q=256$ and $k=1$ the probability to draw a non-special divisor
of degree $g-1$ is $\geq \frac{224}{225}\simeq 0.995$. 

\subsection{Comparison with asymptotical results}\label{comp}

In this section,  we compare the previous results with those obtained 
under asymptotical assumptions  on the Zeta Functions
by M. Tsfasman in \cite{tsfa}, M. Tsfasman and S. Vladut in \cite{tsvl} and I. Shparlinski, 
M. Tsfasman and S. Vladut in \cite{shtsvl}. 
First, let us recall 
the notion of asymptotically exact sequence
of algebraic function fields introduced in \cite{tsfa}.


\begin{definition}
Consider a sequence ${\mathcal F}/\F_q=(\FF_k/\F_q)_{k\geq 1}$  
of algebraic function fields $\FF_k/\F_q$ defined over $\F_q$ of genus $g_k$. 
We suppose that  the sequence of genus $g_k$ is an increasing 
sequence growing to infinity. 
The sequence ${\mathcal F}/\F_q$ is called \emph{asymptotically 
exact} if for all $m \geq 1$ the following limit exists:  
$$\beta_m({\mathcal F}/\F_q)= \lim_{g_k \rightarrow \infty} \frac{B_m(\FF_k/\F_q)}{g_k} $$  
where $B_m(\FF_k/\F_q)$ is the number of places of degree $m$
on $\FF_k/\F_q$. 
 \end{definition}
 


Now, let us recall the following two results used by I. Shparlinski, 
M. Tsfasman and S. Vladut in \cite{shtsvl}. These results follow easily from Corollary 2 and Theorem 6 of \cite{tsfa}. 
Note that the proof of Theorem 6  of \textit{loc. cit.} can be found in \cite{tsvl}.

\begin{lemma}\label{lem1}
 Let ${\mathcal F}/\F_q=(\FF_k/\F_q)_{k \geq 1}$ be an asymptotically exact
 sequence 
of algebraic function fields defined over $\F_q$ and 
 $h_k$ be the class number of $\FF_k/\F_q$. 
Then $$log_qh_k=g_k\left( 1+\sum_{m=1}^{\infty}\beta_m \cdot log_q\frac{q^m}{q^m-1} \right)+o(g_k)$$
\end{lemma}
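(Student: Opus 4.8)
The plan is to reduce the statement to an \emph{exact}, absolutely convergent expression for $\log_q h_k$ in terms of the numbers $B_m(\FF_k/\F_q)$ of degree $m$ places, and then pass to the limit termwise by dominated convergence. Throughout I drop the index $k$, write $h=h_k$, $g=g_k$, and let $N_r=\sum_{m\mid r}mB_m(\FF/\F_q)$ be the number of $\F_{q^r}$-rational points of the associated curve, so that $Z(t)=\exp\left(\sum_{r\geq1}\frac{N_r}{r}t^r\right)=\frac{L(t)}{(1-t)(1-qt)}$. Taking logarithms and using $\ln(1-t)+\ln(1-qt)=-\sum_{r\geq1}\frac{1+q^r}{r}t^r$, I would obtain, for $|t|<q^{-1/2}$ (where $L$ is zero-free, its inverse roots $\pi_i^{-1},\overline{\pi_i}^{-1}$ all having modulus $q^{-1/2}$), the Taylor identity
$$\ln L(t)=-\sum_{r=1}^\infty\frac{S_r}{r}t^r,\qquad S_r:=q^r+1-N_r=\sum_{i=1}^g(\pi_i^r+\overline{\pi_i}^r).$$
The Weil estimate $|S_r|\leq 2g\,q^{r/2}$ shows this series converges absolutely at $t=q^{-1}$, which lies strictly inside the disc of convergence; combining this with the functional equation $L(t)=q^g t^{2g}L\!\left(\tfrac{1}{qt}\right)$ evaluated at $t=1$, which gives $h=L(1)=q^gL(q^{-1})$, yields
$$\log_q h=g-\frac{1}{\ln q}\sum_{r=1}^\infty\frac{S_r}{r\,q^r}.$$

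Next I would reorganize the $r$-sum as a sum over \emph{degrees of places}. Writing $b_m:=B_m(\F_q(x)/\F_q)$ for the number of degree $m$ places of the rational function field, one has $q^r+1=\sum_{m\mid r}m\,b_m$, hence $S_r=\sum_{m\mid r}m\,(b_m-B_m)$. Substituting and collecting the terms with $r=mj$, with $\sum_{j\geq1}\frac{q^{-mj}}{j}=-\ln(1-q^{-m})$, produces the exact identity
$$\log_q h=g+\sum_{m=1}^\infty\bigl(B_m-b_m\bigr)\log_q\frac{q^m}{q^m-1}.$$
The rearrangement is legitimate because the series converges absolutely: Möbius inversion turns the bound on $S_r$ into $m\,|B_m-b_m|\leq\sum_{d\mid m}|S_d|\leq c_q\,g\,q^{m/2}$, while $\log_q\frac{q^m}{q^m-1}=-\log_q(1-q^{-m})\leq\frac{2q^{-m}}{\ln q}$, so the $m$-th term is $O\!\bigl(g/(m\,q^{m/2})\bigr)$.

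Finally I would divide by $g=g_k$ and let $k\to\infty$. For each fixed $m$, $\frac{B_m(\FF_k/\F_q)}{g_k}\to\beta_m$ by definition of asymptotic exactness, and $\frac{b_m}{g_k}\to0$ since $b_m$ is independent of $k$, so each term tends to $\beta_m\log_q\frac{q^m}{q^m-1}$. The estimate above furnishes the $k$-uniform majorant $\frac{2c_q}{m\,q^{m/2}\ln q}$ for $\frac{1}{g_k}\bigl|B_m-b_m\bigr|\log_q\frac{q^m}{q^m-1}$, which is summable in $m$; hence dominated convergence for series permits interchanging the limit with the summation, giving $\frac{\log_q h_k}{g_k}\to 1+\sum_{m\geq1}\beta_m\log_q\frac{q^m}{q^m-1}$, i.e.\ the assertion with an $o(g_k)$ error.

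The main obstacle is precisely this interchange of limit and infinite sum. For a single field the naive place-by-place expansion $\sum_m B_m\log_q\frac{q^m}{q^m-1}$ \emph{diverges}, since $B_m\sim q^m/m$ makes the terms behave like $1/m$; a convergent series appears only after subtracting the rational-place counts $b_m$, equivalently after keeping $q^r$ grouped with $N_r$ inside $S_r$. Getting this grouping right, and then extracting from the Weil bound a single $k$-independent summable majorant so that dominated convergence applies uniformly along the whole sequence, is the crux of the argument.
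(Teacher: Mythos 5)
Your proof is correct, but note that the paper does not actually prove this lemma: it is quoted as a known result, attributed to Corollary 2 and Theorem 6 of \cite{tsfa}, with the proof of the latter appearing in \cite{tsvl}; those sources obtain it from the general machinery of asymptotically exact families and limit zeta functions. Your argument is a genuinely self-contained alternative, and every step checks out: the functional equation $L(t)=q^gt^{2g}L\left(\frac{1}{qt}\right)$ evaluated at $t=1$ does give $h=L(1)=q^gL(q^{-1})$; the expansion $\ln L(t)=-\sum_{r\geq1}S_rt^r/r$ is valid at $t=q^{-1}$, which lies strictly inside the zero-free disc $|t|<q^{-1/2}$ where $L$ is real and positive on the real axis; the regrouping of the $r$-sum into a sum over place degrees via $S_r=\sum_{m\mid r}m(b_m-B_m)$ is covered by your absolute-convergence estimate $m|B_m-b_m|\leq\sum_{d\mid m}|S_d|\leq c_q\,g\,q^{m/2}$ (M\"obius inversion plus the Weil bound, using that $\sum_{d\mid m}q^{d/2}=O(q^{m/2})$ with a constant depending only on $q$); and the final interchange of $\lim_{k\to\infty}$ with $\sum_m$ is legitimate because your majorant $2c_q/(m\,q^{m/2}\ln q)$ is summable and independent of $k$, so dominated convergence for series applies. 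You also correctly isolate the crux: the naive series $\sum_m B_m\log_q\frac{q^m}{q^m-1}$ diverges (its terms behave like $1/m$ since $B_m\sim q^m/m$), so a convergent exact identity exists only after subtracting the $\P^1$ counts $b_m$. What your route buys is elementarity and effectivity: the identity $\log_q h=g+\sum_{m\geq1}(B_m-b_m)\log_q\frac{q^m}{q^m-1}$ holds exactly for every individual function field, not merely asymptotically, and could be pushed to explicit error terms. What the paper's citation route buys is economy: the same framework of \cite{tsfa} and \cite{tsvl} simultaneously delivers the companion Lemma \ref{lem2} on $\log_q A_{a_k}$, which your method would need a separate and less immediate argument to recover.
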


\begin{lemma}\label{lem2}
 Let $A_{a_k}$ be the number of effective divisors of degree $a_k$ on
 $\FF_k/\F_q$.
 If $$a_k\geq g_k\left(\sum_{m=1}^{\infty}\frac{m\beta_m}{q^m-1}
  \right)+o(g_k)$$ 
 then
 $$log_qA_{a_k}=a_k+g_k \cdot \sum_{m=1}^{\infty}\beta_m \cdot log_q\frac{q^m}{q^m-1}+o(g_k).$$
\end{lemma}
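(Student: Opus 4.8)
The plan is to route everything through the zeta function and reduce the two--sided estimate to a single exponential count. Writing $Z(t)=\sum_{m\ge0}A_mt^m=L_k(t)/\bigl((1-t)(1-qt)\bigr)$ for the $k$-th field $\FF_k/\F_q$, the functional equation $a_{2g_k-i}=q^{g_k-i}a_i$ gives $L_k(1/q)=q^{-g_k}h_k$, so $\log_q L_k(1/q)=\log_q h_k-g_k$, and Lemma \ref{lem1} turns this into
\[
\log_q L_k(1/q)=g_k\sum_{m\ge1}\beta_m\,\log_q\frac{q^m}{q^m-1}+o(g_k).
\]
Hence the whole assertion reduces to the claim $\log_q A_{a_k}=a_k+\log_q L_k(1/q)+o(g_k)$. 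For $a_k\ge 2g_k$ this is immediate from the closed formula $A_n=\frac{1}{q-1}\bigl(q^{n+1}L_k(1/q)-h_k\bigr)$ (valid for $n\ge 2g_k$), since then $q^{n+1}L_k(1/q)=h_kq^{\,n+1-g_k}$ dominates; the real content is that the estimate survives all the way down to the much smaller threshold in the hypothesis.

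The upper bound is the easy half. Since all $A_m\ge0$, for every $0<t_0<1/q$ one has $A_{a_k}\le t_0^{-a_k}Z(t_0)=t_0^{-a_k}L_k(t_0)/\bigl((1-t_0)(1-qt_0)\bigr)$. I would take $t_0=q^{-1}-\delta_k$ with $\delta_k\to0$ slowly, e.g. $\delta_k=g_k^{-1/2}$. Writing $1-\pi_it_0=(1-\pi_i/q)\bigl(1+\pi_i\delta_k/(1-\pi_i/q)\bigr)$ and using $|\pi_i|=\sqrt q$, so that $|1-\pi_i/q|\ge 1-1/\sqrt q>0$, each ratio $|1-\pi_it_0|/|1-\pi_i/q|$ is $1+O(\delta_k)$; the $2g_k$ factors then give $L_k(t_0)\le L_k(1/q)\,q^{O(g_k\delta_k)}=L_k(1/q)\,q^{o(g_k)}$. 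Likewise $t_0^{-a_k}=q^{a_k}q^{O(a_k\delta_k)}=q^{a_k}q^{o(g_k)}$ (as $a_k=O(g_k)$) and $(1-qt_0)^{-1}=(q\delta_k)^{-1}=q^{o(g_k)}$, yielding $A_{a_k}\le q^{a_k}L_k(1/q)\,q^{o(g_k)}$, i.e. the upper half of the reduced claim.

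The matching lower bound is where I expect the genuine difficulty. Here I would use the Euler product $Z(t)=\prod_{m\ge1}(1-t^m)^{-B_m}$, which expands with nonnegative coefficients into
\[
A_n=\sum_{\sum_m m j_m=n}\ \prod_{m\ge1}\binom{B_m+j_m-1}{j_m},
\]
so any single admissible term is a lower bound. The natural ``saddle'' choice $j_m\approx B_mt_0^m/(1-t_0^m)$ has total degree $\sum_m mB_mt_0^m/(1-t_0^m)$, whose limit as $t_0\to 1/q$ is $\sum_m mB_m/(q^m-1)\sim g_k\sum_m m\beta_m/(q^m-1)$ --- \emph{exactly} the hypothesis threshold. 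This is the structural reason the bound on $a_k$ appears: it is the largest expected degree attainable from the pole at $t_0=1/q$. A local central-limit estimate $\binom{B_m+j_m-1}{j_m}t_0^{mj_m}\asymp (1-t_0^m)^{-B_m}/w_m$ with $O(1)$ width $w_m$ per degree gives, for $n$ at (or just above) the threshold, $A_n\ge Z(t_0)\,t_0^{-n}\prod_m w_m^{-1}\ge q^n L_k(1/q)\,q^{-o(g_k)}$, which closes the estimate there. The delicate points --- that only finitely many $j_m$ are nonzero (as $B_m/(q^m-1)<1$ for large $m$), that the accumulated width factors are $q^{o(g_k)}$ uniformly in $k$, and above all that the rate $q^n$ persists for the \emph{whole} range $\sum_m m\beta_m/(q^m-1)\,g_k\le a_k<2g_k$, where no interior saddle exists and naive padding by low-degree places is too slow --- are precisely the content of Theorem~6 of \cite{tsfa} (whose proof is given in \cite{tsvl}) together with Corollary~2 there. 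Accordingly the cleanest route is to quote those two results and verify that their hypotheses coincide with the threshold identified above.
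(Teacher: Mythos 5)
The paper offers no proof of this lemma at all: it is presented as a recalled result, justified only by the sentence that it ``follows easily from Corollary 2 and Theorem 6 of \cite{tsfa}'', the proof of the latter being in \cite{tsvl}. Your proposal, after genuine preliminary work, defers the decisive step --- the lower bound on $A_{a_k}$ in the range between the stated threshold and $2g_k$ --- to exactly those same references, so in its final form it coincides with the paper's approach. What you add on top is real and correct: the reduction via the functional equation $L_k(1/q)=q^{-g_k}h_k$, which together with Lemma \ref{lem1} turns the assertion into $\log_q A_{a_k}=a_k+\log_q L_k(1/q)+o(g_k)$; the upper bound $A_{a_k}\le t_0^{-a_k}Z(t_0)$ with $t_0=q^{-1}-g_k^{-1/2}$, whose factor-by-factor comparison of $L_k(t_0)$ with $L_k(1/q)$ is sound since $|1-\pi_i/q|\ge 1-q^{-1/2}$; and the exact treatment of the easy range $a_k\ge 2g_k$.

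One warning about the saddle-point heuristic, in case you ever try to make the lower bound self-contained: the claim that the expected degree $\sum_m mB_m t_0^m/(1-t_0^m)$ tends to $\sum_m mB_m/(q^m-1)\sim g_k\sum_m m\beta_m/(q^m-1)$ as $t_0\to 1/q$ is false for a fixed field. By the function-field prime number theorem $mB_m\sim q^m$, so the series $\sum_m mB_m/(q^m-1)$ diverges (its terms tend to $1$); equivalently, $Z$ has a pole at $t=1/q$, so its logarithmic derivative blows up there. The threshold $g_k\sum_m m\beta_m/(q^m-1)$ emerges only in a coupled limit: truncate the Euler product at degree $M$, take $t_0=1/q-\delta_k$ with $g_k^{-1}\ll\delta_k\ll 1$, and control the tail $m>M$ by the Weil bounds, which contributes $O(1/(q\delta_k))+O(g_k q^{-M/2})$. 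This interchange-of-limits issue, together with uniformity over the whole admissible range of $a_k$, is precisely the content of the cited Theorem 6; your instinct to quote it rather than reprove it is therefore sound, but the sketch as written would not survive being promoted to a proof.
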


These asymptotical properties were established in \cite{tsfa} and \cite{tsvl} in order to estimate the class number $h$ 
of algebraic function fields of genus $g$ defined over $\F_q$ and also in order to estimate their number of classes of effective divisors of degree $m\leq g-1$. Namely, I. Shparlinski, M. Tsfasman and S. Vladut used in \cite{shtsvl} the inequality $2A_{g_k(1-\epsilon)}<h_k$ where $0< \epsilon <\frac{1}{2}$ and $k$ big enough, under the hypothesis of Lemma \ref{lem1}. In the same spirit, we give here a generalization of their result in the following proposition and corollary.
\begin{proposition}\label{doubledivspec}
Let ${\mathcal F}/\F_q=(\FF_k/\F_q)_{k\geq 1}$ be an 
asymptotically exact sequence of algebraic function field defined over $\F_q$.
Let $\epsilon$ and $l$ be two real numbers such that $0< \epsilon <\frac{1}{2}$ and $l\geq 1$.
Then, there exists an integer $k_0$ such that for any integer $k\geq k_0$, we get:  
$$lA_{\lceil g_k(1-\epsilon) \rceil}<h_k$$ 
\end{proposition}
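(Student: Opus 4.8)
The plan is to take $a_k = \lceil g_k(1-\epsilon) \rceil$ and to compare the asymptotic growth rates of $A_{a_k}$ and $h_k$ supplied by Lemmas \ref{lem1} and \ref{lem2}. Writing $S = \sum_{m=1}^{\infty}\beta_m \log_q\frac{q^m}{q^m-1}$, Lemma \ref{lem1} gives $\log_q h_k = g_k(1+S)+o(g_k)$, and, once the hypothesis of Lemma \ref{lem2} has been verified, that lemma gives $\log_q A_{a_k} = a_k + g_k S + o(g_k)$. Subtracting, the common term $g_k S$ cancels, so that $\log_q(h_k)-\log_q(l\,A_{a_k}) = (g_k - a_k) - \log_q l + o(g_k)$. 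Since $g_k - a_k \geq \epsilon g_k - 1$ and $\log_q l$ is a fixed constant, the right-hand side is $\epsilon g_k + o(g_k)$, which tends to $+\infty$ because $g_k \to \infty$. Hence there is a $k_0$ such that $l\,A_{a_k} < h_k$ for all $k \geq k_0$, which is exactly the claim.

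The step I would carry out first, and which is the heart of the argument, is the verification of the hypothesis of Lemma \ref{lem2}, namely that $a_k \geq g_k\bigl(\sum_{m=1}^{\infty}\frac{m\beta_m}{q^m-1}\bigr)+o(g_k)$. I would control this sum by the Tsfasman--Vlădu\c{t} basic inequality $\sum_{m=1}^{\infty}\frac{m\beta_m}{q^{m/2}-1}\leq 1$, valid for asymptotically exact families (it is established in \cite{tsfa} and \cite{tsvl} in the same spirit as the bound $L(q^{-1/2})\geq 0$ used in the proof of Theorem \ref{princ}). Factoring $q^m-1=(q^{m/2}-1)(q^{m/2}+1)$ and using $q^{m/2}+1\geq \sqrt{q}+1$ for every $m\geq 1$, I obtain $\sum_{m}\frac{m\beta_m}{q^m-1}\leq \frac{1}{\sqrt{q}+1}\sum_{m}\frac{m\beta_m}{q^{m/2}-1}\leq \frac{1}{\sqrt{q}+1}\leq \sqrt{2}-1<\tfrac12$. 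Because $\epsilon<\tfrac12$, we have $1-\epsilon>\tfrac12>\sqrt{2}-1$, so $a_k\geq g_k(1-\epsilon)$ exceeds $g_k\sum_{m}\frac{m\beta_m}{q^m-1}$ by a quantity proportional to $g_k$, which dominates the $o(g_k)$ error; this confirms the hypothesis for $k$ large.

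The main obstacle is precisely this bound on $\sum_{m}\frac{m\beta_m}{q^m-1}$: without the sharp basic inequality, the naive Weil estimate $\beta_m\leq 2q^{m/2}/m$ only yields $\sum_{m}\frac{m\beta_m}{q^m-1}\leq \frac{2}{\sqrt{q}-1}$, which already exceeds $1$ for small $q$ and is therefore useless. A secondary point needing care is that the several $o(g_k)$ terms coming from Lemmas \ref{lem1} and \ref{lem2} must be absorbed uniformly; since each is genuinely $o(g_k)$ while the surplus is the \emph{linear} term $\epsilon g_k$, it suffices to choose $k_0$ so large that $\epsilon g_k$ simultaneously dominates all these error terms and the constant $\log_q l$, which completes the proof.
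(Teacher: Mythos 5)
Your proof is correct and follows essentially the same route as the paper: both verify the hypothesis of Lemma \ref{lem2} by bounding $\sum_{m}\frac{m\beta_m}{q^m-1}$ via the basic inequality $\sum_{m}\frac{m\beta_m}{q^{m/2}-1}\leq 1$ of \cite{tsfa} (giving a bound $\frac{1}{\sqrt{q}+1}<\frac12<1-\epsilon$), and then compare $\log_q(l\,A_{\lceil g_k(1-\epsilon)\rceil})$ with $\log_q h_k$ from Lemma \ref{lem1}. Your write-up simply makes explicit the cancellation of the common term and the dominance of the linear surplus $\epsilon g_k$ over the $o(g_k)$ errors, which the paper leaves implicit.
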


\begin{proof}
The total number of linear equivalence classes of an arbitrary 
degree equals to the divisor class number $h_k$ of $\FF_k/\F_q$, 
which is given by Lemma \ref{lem1}. Moreover, for $g_k$ sufficiently large, 
we have:
$$\sum_{m=1}^{\infty}\frac{m\beta_m}{q^m-1} \leq  \frac{1}{\sqrt{q}+1} \sum_{m=1}^{\infty}\frac{m\beta_m}{q^{\frac{m}{2}}-1}<\frac{1}{2}$$
 since $q\geq 2$ and $ \sum_{m=1}^{\infty} \frac{m\beta_m}{q^{\frac{m}{2}}-1}\leq 1$ 
by Corollary 1 of \cite{tsfa}. As $\epsilon<\frac{1}{2}$, one has
$$\lceil g_k(1-\epsilon) \rceil \geq g_k(1-\epsilon) \geq g_k\left(\sum_{m=1}^{\infty}\frac{m\beta_m}{q^m-1}
  \right)+o(g_k)$$ 
for $k$ big enough.
Therefore, we can apply Lemma \ref{lem2} and compare $\log_q l A_{\lceil g_k(1-\epsilon) \rceil}$ with $\log_q h_k$ given by Lemma \ref{lem1}. Hence, there exists an integer $k_0$ such that for $k \geq k_0$,   
$lA_{\lceil g_k(1-\epsilon) \rceil}<h_k$.
\end{proof}

\begin{corollary}\label{coroasymp}
Let ${\mathcal F}/\F_q=(\FF_k/\F_q)_{k\geq 1}$ be an asymptotically 
exact sequence of algebraic function fields defined over $\F_q$. 
Let $\epsilon$ be
a real number such that $0<\epsilon <\frac{1}{2}$ and $\phi=o(g_k)$ a function such that $g_k(1-\epsilon)+\phi(g_k)$ is an integer.
Then there exists an integer $k_0$ such that for any integer $k\geq k_0$, 
there is a divisor of degree $g_k(1-\epsilon)+\phi(g_k)$ and dimension zero
in ${\div}\left(\FF_k/\F_q\right)$.
\end{corollary}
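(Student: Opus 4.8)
The plan is to derive Corollary~\ref{coroasymp} as an essentially immediate consequence of Proposition~\ref{doubledivspec} combined with the counting Lemma~\ref{pp}. The strategy rests on the observation that Lemma~\ref{pp} converts the purely numerical inequality $A_n < h$ into an existence statement for a dimension zero divisor of degree~$n$; thus it suffices to produce, for all $k$ large, a degree $n_k = g_k(1-\epsilon)+\phi(g_k)$ at which $A_{n_k} < h_k$. Since $\phi = o(g_k)$, the degree $n_k$ differs from $\lceil g_k(1-\epsilon)\rceil$ only by a sublinear amount, so I want to show that the asymptotic estimate of Proposition~\ref{doubledivspec} is robust enough to absorb this perturbation.

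First I would invoke Proposition~\ref{doubledivspec} with the parameter $l=1$ (or any fixed $l \geq 1$), which already yields an integer $k_1$ such that $A_{\lceil g_k(1-\epsilon)\rceil} < h_k$ for all $k \geq k_1$. The only gap is that Corollary~\ref{coroasymp} asks for the degree $n_k = g_k(1-\epsilon)+\phi(g_k)$ rather than $\lceil g_k(1-\epsilon)\rceil$. To bridge this I would re-run the argument of Proposition~\ref{doubledivspec} directly with the sequence $a_k = n_k$ in place of $\lceil g_k(1-\epsilon)\rceil$. The key point is that the hypothesis of Lemma~\ref{lem2} requires only
$$
a_k \geq g_k\left(\sum_{m=1}^{\infty}\frac{m\beta_m}{q^m-1}\right)+o(g_k),
$$
and because $n_k = g_k(1-\epsilon) + o(g_k)$ while the right-hand side is $g_k \cdot c + o(g_k)$ with $c = \sum_{m}\frac{m\beta_m}{q^m-1} < \tfrac{1}{2} < 1-\epsilon$ (exactly the inequality established inside the proof of Proposition~\ref{doubledivspec}), the sublinear term $\phi(g_k)$ cannot violate the hypothesis for $k$ large. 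Hence Lemma~\ref{lem2} applies and gives
$$
\log_q A_{n_k} = n_k + g_k\sum_{m=1}^{\infty}\beta_m\log_q\frac{q^m}{q^m-1} + o(g_k).
$$

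Then I would compare this with $\log_q h_k$ from Lemma~\ref{lem1}, namely
$$
\log_q h_k = g_k\left(1+\sum_{m=1}^{\infty}\beta_m\log_q\frac{q^m}{q^m-1}\right)+o(g_k).
$$
Subtracting, the common term $g_k\sum_m\beta_m\log_q\frac{q^m}{q^m-1}$ cancels, leaving
$$
\log_q\frac{h_k}{A_{n_k}} = g_k - n_k + o(g_k) = g_k\epsilon - \phi(g_k) + o(g_k) = g_k\epsilon + o(g_k),
$$
which tends to $+\infty$ since $\epsilon > 0$ is fixed. Consequently there is an integer $k_0 \geq k_1$ such that $A_{n_k} < h_k$ for all $k \geq k_0$. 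Applying Lemma~\ref{pp} at degree $n = n_k$ then produces a dimension zero divisor of degree $g_k(1-\epsilon)+\phi(g_k)$ in $\div(\FF_k/\F_q)$, as required.

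The main obstacle I anticipate is purely bookkeeping: one must check that the margin $g_k\epsilon$ in the exponent genuinely dominates the sublinear correction $\phi(g_k)$ uniformly, i.e.\ that $\phi = o(g_k)$ is consumed correctly on both the hypothesis side (so Lemma~\ref{lem2} stays applicable) and the conclusion side (so the difference of logarithms still diverges). Since $\epsilon$ is a fixed positive constant and every error term is genuinely $o(g_k)$, this presents no real difficulty — the positive linear gap $g_k\epsilon$ always eventually beats any sublinear quantity. Thus the corollary follows with essentially no new ideas beyond a careful reinvocation of the proof of Proposition~\ref{doubledivspec} at the shifted degree.
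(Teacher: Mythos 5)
Your proof is correct and follows essentially the same route as the paper: the paper's own proof is the one-line observation that the corollary follows from Proposition~\ref{doubledivspec} applied with $l=1$ together with Lemma~\ref{pp}. Your write-up is in fact more careful than the paper's, since you explicitly re-run the Lemma~\ref{lem1}/Lemma~\ref{lem2} comparison at the shifted degree $g_k(1-\epsilon)+\phi(g_k)$, whereas the paper's citation of the proposition (which literally covers only the degree $\lceil g_k(1-\epsilon)\rceil$) silently glosses over the $o(g_k)$ perturbation that a general $\phi$ introduces.
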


\begin{proof}
The corollary is a consequence of Proposition \ref{doubledivspec} 
applied with $l=1$
and Lemma \ref{pp}.
\end{proof}

On the other hand Theorem \ref{princ}
implies the following result.
\begin{proposition}
Let $q$ be a prime power and $l \geq 1$ be a real number. Then
for any algebraic function field $\FF/\F_q$ of genus $g$
and any strictly positive integer $k$ such that
$$2\,\log_q(l)-2\,\log_q(C_q)\leq k $$
we have: 
$$A_{g-k}<\frac{h}{l}.$$
\end{proposition}


%
%

If we compare this proposition to Proposition \ref{doubledivspec}, and thus to Tsfasman and Vladut's inequality,
we see that the hypothesis ``asymptotical exact sequence''
is no longer necessary. Moreover, the range 
covered by the divisor order $g-k$ is now larger than 
the one covered by $g(1-\epsilon)$. In particular,
we can now take a constant $k$ or $k$ growing slowly to infinity, for example
$k=\log_q(g)$, which was not possible in Proposition
 \ref{doubledivspec}.

\bibliographystyle{plain}
\bibliography{stdlib_sbrr}

\begin{thebibliography}{10}

\bibitem{arcoal}
Arbarello, Cornalba, Griffiths, and Joe Harris.
\newblock {\em {G}eometry of {A}lgebraic {C}urves}, volume~I.
\newblock Springer, 1985.

\bibitem{ball1}
St\'ephane Ballet.
\newblock Curves with many points and multiplication complexity in any
  extension of ${\F}_q$.
\newblock {\em {F}inite {F}ields and {T}heir {A}pplications}, 5:364--377, 1999.

\bibitem{ball5}
St\'ephane Ballet.
\newblock On the tensor rank of the multiplication in the finite fields.
\newblock {\em {J}ournal of {N}umber {T}heory}, 128:1795--1806, 2008.

\bibitem{balb}
St\'ephane Ballet and Dominique Le~Brigand.
\newblock On the existence of non-special divisors of degree $g$ and $g-1$ in
  algebraic function fields over ${\F}_q$.
\newblock {\em {J}ournal on {N}umber {T}heory}, 116:293--310, 2006.

\bibitem{chch}
David Chudnovsky and Gregory Chudnovsky.
\newblock Algebraic complexities and algebraic curves over finite fields.
\newblock {\em {J}ournal of {C}omplexity}, 4:285--316, 1988.

\bibitem{lemaqu}
JamesLeitzel, Manohar Madan, and Clifford Queen.
\newblock Algebraic function fields with small class number.
\newblock {\em {J}ournal of {N}umber {T}heory}, 7:11--27, 1975.

\bibitem{lamd}
Gilles Lachaud and Mireille Martin-Deschamps.
\newblock Nombre de points des jacobiennes sur un corps finis.
\newblock {\em {A}cta {A}rithmetica}, 56(4):329--340, 1990.

\bibitem{maqu}
Manohar Madan and Clifford Queen.
\newblock Algebraic function fields of class number one.
\newblock {\em {A}cta {A}rithmetica}, 20:423--432, 1972.

\bibitem{mani}
Yuri Manin.
\newblock The hasse-witt matrix of an algebraic curve.
\newblock {\em Izvestiya Akademii Nauk SSSR. Seriya Matematicheskaya},
  25:153--172, 1961.

\bibitem{nixi}
Harald Niederreiter and Chaoping Xing.
\newblock Low-discrepancy sequences and global function fields with many
  rational places.
\newblock {\em {F}inite {F}ields and {T}heir {A}pplications}, 2:241--273, 1996.

\bibitem{pell}
Ruud Pellikaan.
\newblock On special divisors and the two variable zeta function on algebraic
  curves over finite fields.
\newblock In {R}. {P}ellikaan, {M}. {P}erret, and {S}. {V}ladut, editors, {\em
  {A}rithmetic, {G}eometry and {C}oding {T}heory}, pages 175--184, Berlin - New
  York, 1996. Walter de Gruyter.
\newblock Proceedings of AGCT-4 conference, June 28 - July 2, 1993, Luminy.

\bibitem{serr}
Jean-Pierre Serre.
\newblock Sur la topologie des vari\'et\'es alg\'ebriques en caract\'eristique
  $p$.
\newblock {\em Symp. Int. Top. Alg., Mexico City (or \oe uvres {\bf 1},
  501-530)}, pages 24--53, 1958.

\bibitem{shtsvl}
Igor Shparlinski, Michael Tsfasman, and Serguei Vladut.
\newblock Curves with many points and multiplication in finite fields.
\newblock In {H}. {S}tichtenoth and {M}.{A}. {T}sfasman, editors, {\em {C}oding
  {T}heory and {A}lgebraic {G}eometry}, number 1518 in {L}ectures {N}otes in
  {M}athematics, pages 145--169, Berlin, 1992. Springer-Verlag.
\newblock Proceedings of AGCT-3 conference, June 17-21, 1991, Luminy.

\bibitem{stic}
Henning Stichtenoth.
\newblock {\em Algebraic Function Fields and Codes}.
\newblock Number 314 in {L}ectures {N}otes in {M}athematics. Springer-Verlag,
  1993.

\bibitem{stvo}
Karl-Otto St{\"o}r and Jos\'e Voloch.
\newblock A formula for the cartier operator on plane algebraic curves.
\newblock {\em J. f{\"u}r die Reine und Ang. Math.}, 377:49--64, 1987.

\bibitem{tsfa}
Michael Tsfasman.
\newblock Some remarks on the asymptotic number of points.
\newblock In {H}. {S}tichtenoth and {M}.{A}. {T}sfasman, editors, {\em {C}oding
  {T}heory and {A}lgebraic {G}eometry}, volume 1518 of {\em {L}ecture {N}otes
  in {M}athematics}, pages 178--192, Berlin, 1992. Springer-Verlag.
\newblock Proceedings of AGCT-3 conference, June 17-21, 1991, Luminy.

\bibitem{tsvl}
Michael Tsfasman and Serguei Vladut.
\newblock Asymptotic properties of zeta-functions.
\newblock {\em {J}ournal of {M}athematical Sciences}, 84(5):1445--1467, 1997.

\end{thebibliography}
\end{document}